\title{High order Bellman equations and weakly chained diagonally dominant tensors}
\author{%
Parsiad Azimzadeh%
\thanks{Department of Mathematics, University of Michigan
({\tt parsiad@umich.edu}, {\tt erhan@umich.edu}).}
\and
Erhan Bayraktar\footnotemark[1] %
\thanks{E. Bayraktar is supported in part by the National Science Foundation under grant DMS-1613170.}.%
}
	\newcommand{\qedhere}{}
	\newcommand{\figwidth}{4in}
	\newcommand{\figscale}{1}
	\date{}
	\renewcommand*{\leq}{\leqslant}
	\renewcommand*{\geq}{\geqslant}
	\newcounter{dummy}
	\newtheorem{corollary}[dummy]{Corollary}
	\newtheorem{definition}[dummy]{Definition}
	\newtheorem{lemma}[dummy]{Lemma}
	\newtheorem{proposition}[dummy]{Proposition}
	\newtheorem{rem}[dummy]{Remark}
	\newtheorem{theorem}[dummy]{Theorem}
	\newcommand{\cref}{\prettyref}
	\newcommand{\Cref}{\prettyref}
	\newcommand*{\email}[1]{\href{mailto:#1}{\nolinkurl{#1}} }
	\definecolor{mylinkcolor}{HTML}{0066cc}
	\definecolor{mycitecolor}{HTML}{008800}
	\definecolor{myurlcolor}{HTML}{0066cc}
	\let\OLDthebibliography\thebibliography
	\renewcommand\thebibliography[1]{
	  \OLDthebibliography{#1}
	  \setlength{\parskip}{0pt}
	  \setlength{\itemsep}{4pt}
	}
	\newcommand{\figwidth}{5.75in}
	\newcommand{\figscale}{1.125}
\begin{document}

\maketitle

\begin{abstract}
	We introduce \emph{high order Bellman equations}, extending classical Bellman equations to the tensor setting.
	We introduce \emph{weakly chained diagonally dominant} (w.c.d.d.) tensors and show that a sufficient condition for the existence and uniqueness of a positive solution to a high order Bellman equation is that the tensors appearing in the equation are w.c.d.d. M-tensors.
	In this case, we give a \emph{policy iteration} algorithm to compute this solution.
	We also prove that a weakly diagonally dominant Z-tensor with nonnegative diagonals is a strong M-tensor if and only if it is w.c.d.d.
	This last point is analogous to a corresponding result in the matrix setting and tightens a result from {[}L. Zhang, L. Qi, and G. Zhou. ``M-tensors and some applications.'' \emph{SIAM Journal on Matrix Analysis and Applications} (2014){]}.
	We apply our results to obtain a provably convergent numerical scheme for an optimal control problem using an ``optimize then discretize'' approach which outperforms (in both computation time and accuracy) a classical ``discretize then optimize'' approach.
	To the best of our knowledge, a link between M-tensors and optimal control has not been previously established.
\end{abstract}
\section{Introduction}

In this work, we introduce and study the nonlinear problem
\begin{equation}
	\text{find }u\in\mathbb{R}^{n}\text{ such that }\min_{P\in\mathcal{P}}\left\{ A(P)u^{m-1}-b(P)\right\} =0\label{eq:bellman}
\end{equation}
where $A(P)$ is an $m$-order and $n$-dimensional real tensor, $b(P)$ is a real vector, $\mathcal{P}$ is a nonempty compact set, and the minimum is taken with respect to the coordinatewise order on $\mathbb{R}^{n}$ (see \ref{enu:coordinatewise} in \cref{sec:bellman}).

If $m=2$, then $A(P)$ is a square matrix and $A(P)u^{m-1}\equiv A(P)u$ is the ordinary matrix-vector product.
In this case, \eqref{eq:bellman} is the celebrated Bellman equation for optimal decision making on a Markov chain.
Aside from Markov chains, \eqref{eq:bellman} also
arises from discretizations of differential equations from optimal control \cite{MR1217486}.

If $m>2$, then $A(P)u^{m-1}$ defines a vector whose $i$-th component is a multivariate polynomial in the entries of $u$:
\[
	(A(P)u^{m-1})_{i}=\sum_{i_{2},\ldots,i_{m}}(A(P))_{ii_{2}\cdots i_{m}}u_{i_{2}}\cdots u_{i_{m}}
\]
(see also \eqref{eq:polynomial}).
As such, we refer to \eqref{eq:bellman} as a \emph{Bellman equation of order $m$}.
We are motivated to study this equation since, as we will see in the sequel, it arises from a so-called ``optimize then discretize'' \cite{MR2333222} scheme for a differential equation.

Our main goal is to characterize the existence and uniqueness of a solution $u$ to \eqref{eq:bellman} and to obtain a fast and provably convergent algorithm for computing it.
If $m=2$, existence and uniqueness is guaranteed when $A(P)$ is a nonsingular M-matrix for each $P$ \cite[Theorem 2.1]{MR2551155} (along with some other mild conditions
on the functions $A$ and $b$).
We obtain an analogous result for the $m>2$ case when $A(P)$ is a strictly diagonally dominant (and hence nonsingular\footnote{The terms ``nonsingular M-tensor'' and ``strong M-tensor'' are synonymous in the literature.}) \emph{M-tensor} for each $P$ (\cref{lem:uniqueness} and \cref{lem:existence}).
M-tensors, a generalization of M-matrices, were introduced in \cite{MR3190753,MR3116429} in order to test the positive definiteness of multivariate polynomials.

However, strict diagonal dominance is a rather strong condition.
In order to generalize our results, we extend the notion of \emph{weakly chained diagonal dominance} from matrices to tensors (\cref{def:wcdd}).
By restricting our attention to the case in which $\mathcal{P}$ is finite, we establish existence and uniqueness of a solution $u$ to \eqref{eq:bellman} under the weaker requirement that $A(P)$ is a \emph{weakly chained diagonally dominant} M-tensor (\cref{lem:existence_wcdd}) and give a policy iteration algorithm to compute the solution (\cref{alg:policy_iteration}).
Analogously to the $m=2$ case, the assumed finitude of $\mathcal{P}$ is sufficient for practical applications, though whether this assumption can be dropped remains an interesting open theoretical question (\cref{rem:finite_disclaimer}).

We also establish the following result, which should be of broader interest to the M-tensor community:
\begin{theorem}
	\label{thm:wcdd}
	Let $A$ be a weakly diagonally dominant Z-tensor with nonnegative diagonals.
	Then, the following are equivalent:
	\begin{enumerate}[label=(\emph{\roman*})]
		\item \label{enu:1}$A$ is a strong M-tensor.
		\item \label{enu:2}Zero is not an eigenvalue of $A$.
		\item \label{enu:3}$A$ is weakly chained diagonally dominant (\cref{def:wcdd}).
	\end{enumerate}
\end{theorem}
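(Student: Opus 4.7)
The plan is to establish the cycle (i) $\Rightarrow$ (ii) $\Rightarrow$ (iii) $\Rightarrow$ (i). A preliminary observation is that, under the hypotheses, $A$ is automatically an M-tensor: writing $A = sI - B$ with $s = \max_i a_{ii\cdots i}$, the Z-tensor and nonnegative-diagonal assumptions give $B \geq 0$, while weak diagonal dominance bounds each row sum of $B$ by $s$, so $\rho(B) \leq s$. Being a strong M-tensor is then equivalent to the strict inequality $\rho(B) < s$, and (i) $\Rightarrow$ (ii) is immediate since every H-eigenvalue $\lambda = s - \mu$ of $A$ satisfies $|\mu| \leq \rho(B) < s$, hence $\mathrm{Re}\,\lambda > 0$ and in particular $\lambda \neq 0$.

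For (iii) $\Rightarrow$ (i), I would argue by contradiction. Suppose $\rho(B) = s$; by the Perron--Frobenius theorem for nonnegative tensors, there is a nonzero $u \geq 0$ with $Au^{m-1} = 0$. Normalize so that $\|u\|_\infty = 1$ and set $I_M = \{i : u_i = 1\}$, which is nonempty. For each $i \in I_M$, expanding the $i$-th equation and using $a_{ii_2\cdots i_m} \leq 0$ for off-diagonal tuples yields
\[
	a_{ii\cdots i} \;=\; \sum_{\text{off-diag}} |a_{i i_2 \cdots i_m}|\, u_{i_2}\cdots u_{i_m} \;\leq\; \sum_{\text{off-diag}} |a_{i i_2 \cdots i_m}|,
\]
while weak diagonal dominance gives the reverse inequality, so equality holds throughout. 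Equality then forces (a) row $i$ to be an equality (nonstrict) row, and (b) $u_{i_2}\cdots u_{i_m} = 1$ whenever $a_{i i_2 \cdots i_m} \neq 0$; since $0 \leq u_k \leq 1$, condition (b) means every such index $i_k$ must lie in $I_M$. Now invoke w.c.d.d.: pick any $i \in I_M$ and follow a chain of successors $i = i_0, i_1, \ldots, i_k$ with $i_k$ strictly diagonally dominant. Inducting along the chain using (b), each $i_l \in I_M$, contradicting the strict diagonal dominance at $i_k$.

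For (ii) $\Rightarrow$ (iii) I would argue the contrapositive. Assume $A$ is not w.c.d.d., and let $S$ be the (nonempty) set of rows admitting no chain of successors reaching a strictly diagonally dominant row. Each $i \in S$ is an equality row, and $S$ is closed under the successor relation: if $j \neq i$ is a successor of $i \in S$, then $j$ cannot be strictly dd (else $i \to j$ is a chain) and any chain from $j$ extends to one from $i$, so $j \in S$. Consider the principal subtensor $A_{SS}$: closure implies that every off-diagonal entry $a_{ii_2\cdots i_m}$ of row $i \in S$ with some $i_k \notin S$ vanishes, and hence
\[
	(A_{SS}\, \mathbf{1}_S^{m-1})_i \;=\; a_{ii\cdots i} - \sum_{\text{off-diag}} |a_{i i_2 \cdots i_m}| \;=\; 0.
\]
Thus $0$ is an H-eigenvalue of $A_{SS} = sI_S - B_{SS}$, forcing $\rho(B_{SS}) = s$; by monotonicity of the spectral radius on principal subtensors of nonnegative tensors, $\rho(B) \geq \rho(B_{SS}) = s$, so $A$ is not a strong M-tensor, and Perron--Frobenius applied to $B$ delivers a nonzero nonnegative eigenvector witnessing $0$ as an H-eigenvalue of $A$, contradicting (ii).

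The main obstacle will be the tensor chain argument of the second paragraph: whereas the matrix row-sum inequality splits into single-index contributions $|a_{ij}||u_j|$, here each summand is a multilinear product $u_{i_2}\cdots u_{i_m}$, so the equality-case analysis must simultaneously force all $m-1$ indices $i_2, \ldots, i_m$ into $I_M$. This is precisely what the tensor successor relation underlying \cref{def:wcdd} must encode, and it is what makes strong M-tensor the right analogue of the classical nonsingular M-matrix characterization in the tensor setting.
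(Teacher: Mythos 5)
Your proposal is correct, but takes a genuinely different route from the paper, most notably in the (\emph{ii})$\Rightarrow$(\emph{iii}) direction. The paper's proof of that implication is constructive: it permutes so that the ``unreachable'' set $R(A)$ occupies the leading indices, decomposes $A = B + (C+D) - C$ into a nilpotent-like top block, a modified identity, and a bottom block, shows $C+D$ is a w.c.d.d.\ strong M-tensor (invoking the already-proved (\emph{iii})$\Rightarrow$(\emph{i})), and then uses a corollary of Ding's existence/monotonicity result to explicitly build a nonzero vector $y = (e, w)^\intercal$ with $Ay^{m-1}=0$. You instead observe that the principal subtensor $A_{SS}$ (your $S$ is the paper's $R(A)$) annihilates the all-ones vector, lift this to $\rho(B_{SS}) \geq s$ via $A_{SS} = sI_S - B_{SS}$, use monotonicity of the spectral radius under passage to principal subtensors to force $\rho(B) = s$, and then invoke the Yang--Yang Perron--Frobenius theorem for general (possibly reducible) nonnegative tensors to obtain a nonnegative eigenvector of $B$ for $s$, hence a zero eigenvector of $A$. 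Your route is shorter and avoids the tensor bookkeeping of the paper's block decomposition, but it leans on two nontrivial off-the-shelf facts from the nonnegative-tensor Perron--Frobenius theory (existence of a nonnegative Perron eigenvector without any irreducibility hypothesis, and monotonicity of $\rho$ on principal subtensors) that the paper's more self-contained argument bypasses. Similarly for (\emph{iii})$\Rightarrow$(\emph{i}): the paper works directly with a general complex eigenpair and shows $\operatorname{Re}\lambda > 0$, whereas you first reduce to a real nonnegative zero-eigenvector via Perron--Frobenius before running the equality-case/walk argument; both close the same way. One cosmetic point: your set $S$ should be defined with the restriction $S \subseteq \{1,\ldots,n\}\setminus J(A)$ (you do use $i \in S \Rightarrow i$ is an equality row, which presupposes this), matching the paper's $R(A)$, and the final clause in that paragraph should read ``so (\emph{ii}) fails'' rather than ``contradicting (\emph{ii})'' since the argument is a direct contrapositive rather than a reductio.
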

An analogous equivalence result for matrices was recently proved in \cite{azimzadeh2017fast}.
Since a weakly irreducibly diagonally dominant tensor is a weakly chained diagonally dominant tensor (\cref{lem:irreducible}), the following is an immediate consequence:
\begin{corollary}
	\label{cor:wcdd}Let $A$ be a Z-tensor with nonnegative diagonals.
	If $A$ is weakly irreducibly diagonally dominant, then $A$ is a strong M-tensor.
\end{corollary}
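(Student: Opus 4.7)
The plan is to observe that the corollary is essentially a direct concatenation of the two preceding results, namely Lemma \ref{lem:irreducible} (weak irreducible diagonal dominance implies weak chained diagonal dominance) and the implication \ref{enu:3}$\Rightarrow$\ref{enu:1} of Theorem \ref{thm:wcdd}. The only mild point that needs checking is that the hypotheses of Theorem \ref{thm:wcdd} are in force, i.e., that $A$ is weakly diagonally dominant.

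First I would unpack the definition of weak irreducible diagonal dominance: by convention, a weakly irreducibly diagonally dominant tensor is, in particular, weakly diagonally dominant (the irreducibility condition is an additional structural requirement on top of w.d.d., typically formulated as weak irreducibility of an associated graph together with strict dominance at at least one index). Thus, assuming $A$ is a Z-tensor with nonnegative diagonals that is weakly irreducibly diagonally dominant, $A$ satisfies the hypotheses of Theorem \ref{thm:wcdd}.

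Next, Lemma \ref{lem:irreducible} applies directly and yields that $A$ is weakly chained diagonally dominant, i.e., condition \ref{enu:3} of Theorem \ref{thm:wcdd} holds. Invoking the equivalence \ref{enu:3}$\Leftrightarrow$\ref{enu:1} of Theorem \ref{thm:wcdd} then gives that $A$ is a strong M-tensor, which is the desired conclusion.

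There is no real obstacle here: the corollary is stated precisely so as to be an immediate byproduct of Theorem \ref{thm:wcdd} and Lemma \ref{lem:irreducible}, and the only substantive content hidden in the argument lies inside those two results. The proof should therefore be just a couple of lines, with the only care required being to verify that the phrasing of w.i.d.d. in the paper matches the hypothesis needed to invoke Lemma \ref{lem:irreducible} and to feed into Theorem \ref{thm:wcdd}.
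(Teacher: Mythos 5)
Your proof is correct and takes exactly the paper's route: the paper presents \cref{cor:wcdd} as an immediate consequence of \cref{lem:irreducible} together with the equivalence \ref{enu:3}$\Leftrightarrow$\ref{enu:1} of \cref{thm:wcdd}. Your extra care in noting that w.i.d.d.\ entails w.d.d.\ (so the hypotheses of \cref{thm:wcdd} are met) is warranted and indeed holds by \cref{def:irreducibly_dominant}.
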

Since an irreducible tensor is weakly irreducible (\cref{cor:irreducible}), \cref{thm:wcdd} and \cref{cor:wcdd} can be thought of as tightening the following result:
\begin{proposition}[{\cite[Theorem 3.15]{MR3190753}}]
	\label{prop:zhang_0}Let $A$ be a Z-tensor with nonnegative diagonals.
	If $A$ is strictly or irreducibly diagonally dominant, then $A$ is a strong M-tensor.
\end{proposition}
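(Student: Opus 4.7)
The plan is to derive Proposition \ref{prop:zhang_0} directly from the stronger results Theorem \ref{thm:wcdd} and Corollary \ref{cor:wcdd} established earlier in the excerpt, by treating the two disjuncts of the hypothesis separately.

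First, I would dispatch the strictly diagonally dominant case. By definition, strict diagonal dominance implies weak diagonal dominance, so the hypotheses of Theorem \ref{thm:wcdd} on $A$ being a weakly diagonally dominant Z-tensor with nonnegative diagonals are met. Moreover, the weakly chained diagonal dominance condition of Definition \ref{def:wcdd} is essentially vacuous when every row is already strictly dominant: there are no equality rows that need to be chained to a strictly dominant one, so the trivial single-element chain suffices. Hence $A$ satisfies condition \ref{enu:3} of Theorem \ref{thm:wcdd}, and the implication \ref{enu:3}$\Rightarrow$\ref{enu:1} yields that $A$ is a strong M-tensor.

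Next, I would handle the irreducibly diagonally dominant case. By definition, this means $A$ is weakly diagonally dominant, irreducible, and possesses at least one strictly dominant row. By Corollary \ref{cor:irreducible} mentioned in the text, irreducibility implies weak irreducibility, so $A$ is in fact a weakly irreducibly diagonally dominant Z-tensor with nonnegative diagonals. Applying Corollary \ref{cor:wcdd} (which itself rests on Lemma \ref{lem:irreducible} bridging weak irreducibility to weakly chained diagonal dominance) then gives that $A$ is a strong M-tensor, completing the proof.

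The main obstacle here is essentially notational rather than substantive: one must verify that the conventions for ``strictly'' and ``irreducibly'' diagonally dominant used in \cite[Theorem 3.15]{MR3190753} coincide with the conventions adopted in this paper's Definition \ref{def:wcdd} and surrounding apparatus, in particular that an ``irreducibly diagonally dominant'' tensor is required to be weakly diagonally dominant with at least one row of strict inequality. Once the conventions are aligned, the argument is a one-line reduction in each case, which is precisely the sense in which the paper's Theorem \ref{thm:wcdd} and Corollary \ref{cor:wcdd} tighten \cite[Theorem 3.15]{MR3190753}.
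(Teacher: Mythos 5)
Your derivation is correct, but one should be clear about what the paper actually does with this statement. Proposition~\ref{prop:zhang_0} is presented as a \emph{citation} of \cite[Theorem 3.15]{MR3190753}, not as a result the paper proves; the paper's surrounding text merely observes that Theorem~\ref{thm:wcdd} and Corollary~\ref{cor:wcdd} ``can be thought of as tightening'' it, without spelling out the deduction. You are essentially formalizing that remark: s.d.d.\ makes every row belong to $J(A)$, so the walk condition in Definition~\ref{def:wcdd} is vacuous and $A$ is w.c.d.d.; in the irreducible case, Corollary~\ref{cor:irreducible} promotes irreducibility to weak irreducibility, whence Lemma~\ref{lem:irreducible} and Theorem~\ref{thm:wcdd} (or directly Corollary~\ref{cor:wcdd}) finish it. Both reductions are sound.

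One point worth making explicit, since you are deriving a cited external result from the paper's own theorem: there is no circularity. The proof of Theorem~\ref{thm:wcdd} invokes Proposition~\ref{prop:zhang_2} (which, although also labeled \cite[Theorem 3.15]{MR3190753}, is the strictly weaker conclusion that a w.d.d.\ Z-tensor with nonnegative diagonals is an M-tensor, not a \emph{strong} M-tensor) together with Proposition~\ref{prop:zhang_1}; it does not use Proposition~\ref{prop:zhang_0} itself. So your argument is a legitimate alternative route, giving a self-contained proof of the strong-M-tensor conclusion from the weaker M-tensor input, at the cost of relying on the more elaborate maximum-modulus/walk argument inside the proof of Theorem~\ref{thm:wcdd}, whereas the cited source presumably proves Proposition~\ref{prop:zhang_0} directly.
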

Moreover, \cref{thm:wcdd} yields a graph-theoretic characterization of weakly diagonally dominant strong M-tensors and a fast algorithm to determine if an arbitrary weakly diagonally dominant tensor is a strong M-tensor (\cref{rem:algorithm}).

This work is organized as follows.
In \cref{sec:preliminaries}, we recall some standard definitions and results for tensors.
In \cref{sec:wcdd}, we introduce the notion of a weakly chained diagonally dominant tensor.
A proof of \cref{thm:wcdd} is given in \cref{sec:proof}.
In \cref{sec:bellman}, we study high order ($m>2$) Bellman equations.
In \cref{sec:application}, we use our results to study numerically some problems from optimal stochastic control.
\section{\label{sec:preliminaries}Preliminaries}

For the convenience of the reader, we gather in this section some definitions and well-known results (cf. \cite{MR3660696}) concerning tensors of the form
\[
	A=(a_{i_{1}\cdots i_{m}}),\qquad a_{i_{1}\cdots i_{m}}\in\mathbb{R},\qquad1\leq i_{1},\ldots,i_{m}\leq n.
\]
Such tensors are called $m$-order and $n$-dimensional real tensors, though we will simply refer to them as ``tensors'' in this work.
We call $a_{1\cdots1}$, $a_{2\cdots2}$, etc. the diagonal entries of $A$.
All other entries are referred to as off-diagonal.
\begin{definition}[\cite{MR2178089}]
	Let $x=(x_{1},\ldots,x_{n})$ be a vector in $\mathbb{C}^{n}$ and $A=(a_{i_{1}\cdots i_{m}})$ be a tensor.
	We denote by $x^{[\alpha]}=(x_{1}^{\alpha},\ldots,x_{n}^{\alpha})$ the coordinatewise power of $x$ and by $Ax^{m-1}$ the vector in $\mathbb{C}^{n}$ whose $i$-th entry is
	\begin{equation}
		\sum_{i_{2},\ldots,i_{m}}a_{ii_{2}\cdots i_{m}}x_{i_{2}}\cdots x_{i_{m}}.\label{eq:polynomial}
	\end{equation}
	We call $\lambda$ in $\mathbb{C}$ an \emph{eigenvalue} of $A$ if we can find a vector $x$ in $\mathbb{C}^{n}\setminus\{0\}$ such that
	\[
		Ax^{m-1}=\lambda x^{[m-1]}.
	\]
	The vector $x$ is called an \emph{eigenvector} associated with $\lambda$.
	The spectrum $\sigma(A)$ of $A$ is the set of all eigenvalues of $A$.
	The spectral radius of $A$ is $\rho(A)=\max_{\lambda\in\sigma(A)}|\lambda|$.
\end{definition}
Z and M-tensors, defined below, are natural extensions of Z and M-matrices.
\begin{definition}[{\cite[Pg. 440]{MR3190753}}]
	A \emph{Z-tensor} is a real tensor whose off-diagonal entries are nonpositive.
\end{definition}
\begin{definition}[{\cite[Definition 3.1]{MR3190753}}]
	A tensor $A$ is an \emph{M-tensor} if there exists a nonnegative tensor $B$ (i.e., a tensor with nonnegative entries) and a real number $s\geq\rho(B)$ such that
	\[
		A=sI-B
	\]
	where $I$ is the identity tensor (i.e., the tensor with ones on its diagonal and zeros elsewhere).
	If $s>\rho(B)$, then $A$ is called a \emph{strong M-tensor}.
\end{definition}
Unlike the matrix setting, there are two distinct notions of irreducibility for tensors, introduced in \cite{MR2996365}.
Both are given below.
\begin{definition}[{\cite[Pg. 739]{MR2996365}}]
	A tensor $A=(a_{i_{1}\cdots i_{m}})$ is \emph{reducible} if there exists a nonempty proper index subset $\Lambda\subsetneq\{1,\ldots,n\}$ such that
	\[
		a_{ii_{2}\cdots i_{m}}=0\qquad\text{if }i\in\Lambda\text{ and }i_{2},\ldots,i_{m}\notin\Lambda.
	\]
	Otherwise, we say $A$ is \emph{irreducible}.
\end{definition}
\begin{definition}[{\cite[Definition 2.2]{MR3146525}}]
	Let $A=(a_{i_{1}\cdots i_{m}})$ be a tensor and $R(|A|)$, the \emph{representation} of $A$, denote the $n\times n$ matrix whose $(i,j)$-th entry is given by
	\[
		\sum_{i_{2},\ldots,i_{m}}\left|a_{ii_{2}\cdots i_{m}}\right|\boldsymbol{1}_{\{i_{2},\ldots,i_{m}\}}(j)
	\]
	where $\boldsymbol{1}_{\mathcal{U}}$ is the indicator function of the set $\mathcal{U}$.
	We say $A$ is \emph{weakly reducible} if $R(|A|)$ is a reducible matrix.
	Otherwise, we say $A$ is \emph{weakly irreducible}.
\end{definition}
In \cite[Lemma 3.1]{MR2996365}, the authors show that irreducibility is a stronger requirement than weak irreducibility.
We summarize this below, including what we believe to be a simpler proof for the reader's convenience.
\begin{proposition}
	A tensor that is weakly reducible is reducible.
\end{proposition}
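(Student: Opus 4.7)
The plan is to reuse the same index set as a witness for both notions of reducibility: if $\Lambda \subsetneq \{1,\ldots,n\}$ makes $R(|A|)$ a reducible matrix, I will verify that the \emph{same} $\Lambda$ shows that $A$ itself is reducible as a tensor.

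First I would unpack the hypothesis. Weak reducibility of $A$ means $R(|A|)$ is a reducible matrix, so there exists a nonempty proper $\Lambda$ with $(R(|A|))_{ij}=0$ for every $i\in\Lambda$ and $j\notin\Lambda$. Because this entry is a sum of nonnegative quantities $|a_{ii_2\cdots i_m}|\mathbf{1}_{\{i_2,\ldots,i_m\}}(j)$, vanishing of the sum forces each summand to vanish for all such $(i_2,\ldots,i_m)$.

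The key observation I would then exploit is that $\mathbf{1}_{\{i_2,\ldots,i_m\}}(i_2) = 1$ identically. Thus, given any $i\in\Lambda$ and any tuple $(i_2,\ldots,i_m)$ with every $i_k\notin\Lambda$, specializing $j=i_2$ in the preceding vanishing statement (which is legitimate since $i_2 \notin \Lambda$) yields $|a_{ii_2\cdots i_m}|=0$. This is exactly the condition required by the definition of reducibility of $A$, with the same $\Lambda$; since $\Lambda$ is nonempty and proper by assumption, the witness carries over unchanged.

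I do not expect any real obstacle here; the argument is essentially a repackaging of the definitions. The only subtlety is noticing that a single off-diagonal entry $a_{ii_2\cdots i_m}$ of $A$ contributes to $(R(|A|))_{ij}$ at \emph{every} column index $j\in\{i_2,\ldots,i_m\}$, so the cheapest choice — namely $j=i_2$ — suffices to translate the matrix-level zero pattern back to the tensor-level zero pattern demanded by reducibility.
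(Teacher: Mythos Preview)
Your proof is correct and follows essentially the same approach as the paper: both use the witnessing set $\Lambda$ from the reducibility of $R(|A|)$ and the nonnegativity of the summands to force $a_{ii_2\cdots i_m}=0$. The only cosmetic difference is that the paper records the slightly stronger intermediate fact that $a_{ii_2\cdots i_m}=0$ whenever $i\in\Lambda$ and \emph{some} $i_k\notin\Lambda$ (by taking $j=i_k$), whereas you go directly to the case needed for tensor reducibility where \emph{all} $i_k\notin\Lambda$; both arguments are the same in spirit.
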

\begin{proof}
	Let $A=(a_{i_{1}\cdots i_{m}})$ be a weakly reducible tensor so that the $n\times n$ matrix $R(|A|)=(r_{ij})$ is reducible.
	Then, there exists a nonempty proper index subset $\Lambda\subsetneq\{1,\ldots,n\}$ such that
	\[
		r_{ij}=\sum_{i_{2},\ldots,i_{m}}\left|a_{ii_{2}\cdots i_{m}}\right|\boldsymbol{1}_{\{i_{2},\ldots,i_{m}\}}(j)=0\qquad\text{if }i\in\Lambda\text{ and }j\notin\Lambda.
	\]
	Therefore,
	\[
		a_{ii_{2}\cdots i_{m}}=0\qquad\text{if }i\in\Lambda\text{ and }i_{k}\notin\Lambda\text{ for some }2\leq k\leq m
	\]
	and hence $A$ is reducible.
\end{proof}
\begin{corollary}
	\label{cor:irreducible}An irreducible tensor is weakly irreducible.
\end{corollary}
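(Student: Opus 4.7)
The plan is to obtain the corollary as an immediate contrapositive of the preceding proposition, which states that every weakly reducible tensor is reducible. Since ``irreducible'' is defined as the negation of ``reducible'' and ``weakly irreducible'' as the negation of ``weakly reducible,'' the contrapositive of the proposition reads: every irreducible tensor is weakly irreducible, which is exactly the claim.

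Concretely, I would argue as follows. Suppose $A$ is irreducible. Assume for contradiction that $A$ is weakly reducible. Then by the proposition just established, $A$ is reducible, contradicting irreducibility. Hence $A$ must be weakly irreducible.

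No real obstacle arises here, since the substantive content (passing from weak reducibility of $A$ to reducibility of $A$ via the representation matrix $R(|A|)$) has already been discharged in the proof of the preceding proposition. The only thing to be careful about is to make sure the logical direction is correct: the proposition says weak reducibility implies reducibility, so its contrapositive gives irreducible implies weakly irreducible, which is what we want — not the reverse implication, which would be false in general.
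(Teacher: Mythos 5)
Your argument is correct and is exactly how the paper intends the corollary to follow: it is the contrapositive of the immediately preceding proposition (weakly reducible implies reducible), and the paper offers no further proof precisely because this logical step is immediate. Your care in checking that the implication runs in the right direction is appropriate but uncovers no issue.
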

We close this section with the notion of diagonal dominance.
\begin{definition}[{\cite[Definition 3.14]{MR3190753}}]
	Let $A=(a_{i_{1}\cdots i_{m}})$ be a tensor.
	We say that the $i$-th
	row of $A$ is \emph{strictly diagonally dominant} (s.d.d.) if
	\begin{equation}
		\left|a_{i\cdots i}\right|>\sum_{(i_{2},\ldots i_{m})\neq(i,\ldots,i)}\left|a_{ii_{2}\cdots i_{m}}\right|.\label{eq:sdd}
	\end{equation}
	We say $A$ is s.d.d. if all of its rows are s.d.d. \emph{Weakly diagonally dominant} (w.d.d.) is defined with weak inequality ($\geq$) instead.
	We use
	\[
		J(A) = \left \{
		1 \leq i \leq n
		\colon i \text{ satisfies } \eqref{eq:sdd}
		\right \}
	\]
	to denote the set of s.d.d. rows of $A$.
\end{definition}
\begin{definition}[{\cite[Definition 3.14]{MR3190753}}]
	\label{def:irreducibly_dominant}We say a tensor $A$ is \emph{(weakly) irreducibly diagonally dominant} if it is (weakly) irreducible, w.d.d., and $J(A)$ is nonempty.
\end{definition}
\section{\label{sec:wcdd}Weakly chained diagonally dominant tensors}

Before we introduce the notion of weakly chained diagonally dominant (w.c.d.d.) tensors, we define the directed graph associated with a tensor.
\begin{definition}
	\label{def:graph}Let $A=(a_{i_{1}\cdots i_{m}})$ be a tensor.
	\begin{enumerate}[label=(\emph{\roman*})]
		\item The \emph{directed graph} of $A$, denoted $\operatorname{graph}A$, is a tuple $(V,E)$ consisting of the vertex set $V=\{1,\ldots,n\}$ and edge set $E\subset V\times V$ satisfying $(i,j)\in E$ if and only if $a_{ii_{2}\cdots i_{m}}\neq0$ for some $(i_{2},\ldots,i_{m})$ such that $j\in\{i_{2},\ldots,i_{m}\}$.
		\item A \emph{walk} in $\operatorname{graph}A$ is a nonempty finite sequence $(i^{1},i^{2}),(i^{2},i^{3}),\ldots,(i^{k-1},i^{k})$ of ``adjacent'' edges in $E$.
	\end{enumerate}
\end{definition}
The proof of the next result, being a trivial consequence of the above definition, is omitted.
\begin{lemma}
	\label{lem:graph}$\operatorname{graph}A=\operatorname{graph}R(|A|)$ for any tensor $A$.
\end{lemma}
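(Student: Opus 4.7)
The plan is to unfold both sides into their defining conditions on edges and check that these conditions coincide. On the left, by \cref{def:graph}, $(i,j)$ is an edge of $\operatorname{graph} A$ exactly when there exist indices $i_2,\ldots,i_m$ with $a_{ii_2\cdots i_m}\neq 0$ and $j\in\{i_2,\ldots,i_m\}$. On the right, since $R(|A|)$ is a matrix (i.e., a $2$-order tensor), specializing \cref{def:graph} to $m=2$ collapses the ``some $i_2$ with $j\in\{i_2\}$'' clause to the single requirement $j=i_2$, so $(i,j)$ is an edge of $\operatorname{graph} R(|A|)$ iff the $(i,j)$-th entry $r_{ij}$ of $R(|A|)$ is nonzero.

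The remaining step is to observe that
\[
    r_{ij}=\sum_{i_{2},\ldots,i_{m}}\left|a_{ii_{2}\cdots i_{m}}\right|\boldsymbol{1}_{\{i_{2},\ldots,i_{m}\}}(j)
\]
is a sum of nonnegative terms, hence vanishes iff every summand vanishes. So $r_{ij}\neq 0$ iff at least one summand is positive, which happens iff there exist $i_2,\ldots,i_m$ with both $|a_{ii_2\cdots i_m}|\neq 0$ and $\boldsymbol{1}_{\{i_2,\ldots,i_m\}}(j)=1$, i.e., precisely the condition characterizing edges of $\operatorname{graph} A$.

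Since both graphs have the same vertex set $\{1,\ldots,n\}$ by construction and we have just shown their edge sets coincide, the equality $\operatorname{graph} A=\operatorname{graph} R(|A|)$ follows. There is no real obstacle here: the key observation is simply that $R(|A|)$ is designed so that its off-diagonal sparsity pattern records exactly which vertices $j$ appear among the tail indices of a nonzero entry in row $i$ of $A$, and the nonnegativity of the summands prevents any cancellation that could obscure this pattern.
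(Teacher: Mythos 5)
Your proof is correct and is exactly the argument the paper has in mind: the paper omits the proof, stating only that it is ``a trivial consequence of the above definition,'' and your unfolding of the two edge conditions together with the observation that a sum of nonnegative terms is nonzero iff some summand is nonzero is precisely that trivial consequence spelled out.
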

Since each vertex in the directed graph of a tensor corresponds to a row $i$, we use the terms row and vertex interchangeably.
To simplify matters, we hereafter denote edges by $i\rightarrow j$ instead of $(i,j)$ and walks by $i^{1}\rightarrow i^{2}\rightarrow\cdots\rightarrow i^{k}$
instead of $(i^{1},i^{2}),\ldots,(i^{k-1},i^{k})$.
We are now ready to define w.c.d.d.
\begin{definition}
	\label{def:wcdd}A tensor $A$ is \emph{w.c.d.d.} if all of the following are satisfied:
	\begin{enumerate}[label=(\emph{\roman*})]
		\item $A$ is w.d.d.
		\item $J(A)$ is nonempty.
		\item For each $i^{1}\notin J(A)$, there exists a walk $i^{1}\rightarrow i^{2}\rightarrow\cdots\rightarrow i^{k}$ in $\operatorname{graph}A$ such that $i^{k}\in J(A)$.
	\end{enumerate}
\end{definition}
If the tensor is of order $m=2$ (i.e., the tensor is a matrix), then the above becomes the usual definition of w.c.d.d. for matrices \cite[Definition 2.20]{azimzadeh2017fast}.
\begin{rem}
	\label{rem:algorithm}A trivial extension of the algorithm in \cite{azimzadeh2017fast} allows us to test if a weakly diagonally dominant tensor is weakly chained diagonally dominant with $O(n^{m})$ effort if the tensor is dense.
	Less computational effort is required if the tensor is sparse (we refer to \cite{azimzadeh2017fast} for details). %
\end{rem}
\begin{lemma}
	\label{lem:irreducible}A weakly irreducibly diagonally dominant tensor is a w.c.d.d. tensor.
\end{lemma}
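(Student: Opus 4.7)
The plan is to observe that the only nontrivial condition in \cref{def:wcdd} to verify is the walk condition, since w.d.d. and nonemptyness of $J(A)$ are already part of the hypothesis of weak irreducible diagonal dominance (see \cref{def:irreducibly_dominant}). So suppose $A$ is weakly irreducibly diagonally dominant. Fix any $i^1 \notin J(A)$; since $J(A)$ is nonempty, pick some $i^k \in J(A)$. I need to exhibit a walk from $i^1$ to $i^k$ in $\operatorname{graph} A$.

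By \cref{lem:graph}, $\operatorname{graph} A = \operatorname{graph} R(|A|)$, so it suffices to produce a walk in the directed graph of the nonnegative matrix $R(|A|)$. Weak irreducibility of $A$ means exactly that $R(|A|)$ is an irreducible matrix. The standard graph-theoretic characterization of matrix irreducibility (see, e.g., the classical references on nonnegative matrices) states that $R(|A|)$ is irreducible if and only if its directed graph is strongly connected, meaning that for every pair of vertices $(p,q)$ there is a directed walk from $p$ to $q$. Applying this with $p = i^1$ and $q = i^k$ yields the required walk $i^1 \rightarrow i^2 \rightarrow \cdots \rightarrow i^k$ in $\operatorname{graph} R(|A|) = \operatorname{graph} A$, which completes the verification.

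The argument is essentially a one-line consequence of \cref{lem:graph} together with the matrix-level fact that irreducibility and strong connectedness coincide, so there is no real obstacle; the only thing to be careful about is to invoke \cref{lem:graph} explicitly, since the walk condition in \cref{def:wcdd} is phrased in terms of $\operatorname{graph} A$ while weak irreducibility is phrased in terms of $R(|A|)$.
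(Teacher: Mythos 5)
Your argument is correct and matches the paper's proof: both reduce the walk condition to the strong connectedness of $\operatorname{graph} R(|A|)$ (the standard characterization of matrix irreducibility) and then transfer it to $\operatorname{graph} A$ via \cref{lem:graph}. No further comment is needed.
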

\begin{proof}
	If $A$ is weakly irreducibly diagonally dominant, then $J(A)$ is nonempty and $\operatorname{graph}R(|A|)$ is strongly connected (i.e., for any pair of vertices $(i,j)$, there is a walk starting at $i$ and ending at $j$). The result then follows by \cref{lem:graph}.
\end{proof}
\section{\label{sec:proof}Proof of Theorem \ref{thm:wcdd}}

In the following, we denote by $\operatorname{Re}z$ the real part of a complex number $z$.
We say a vector is positive if it lies in the positive orthant $\mathbb{R}_{++}^{n}=(0,\infty)^{n}$.
Similarly, any element of $\mathbb{R}_{+}^{n}=[0,\infty)^{n}$ is called a nonnegative vector.
Our proof relies on the following results:
\begin{proposition}[{\cite[Theorem 3.3]{MR3190753}}]
	\label{prop:zhang_1}$\min_{\lambda\in\sigma(A)}\operatorname{Re}\lambda$ is nonnegative (resp. positive) whenever $A$ is an M-tensor (resp. strong M-tensor).
\end{proposition}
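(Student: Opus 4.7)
The plan is to reduce the statement about $\sigma(A)$ directly to the Perron--Frobenius spectral radius bound for nonnegative tensors. By definition, we may write $A = sI - B$ with $B$ nonnegative and $s \geq \rho(B)$ (with $s > \rho(B)$ in the strong case). So the first step is to translate an eigenpair of $A$ into one for $B$: if $x \in \mathbb{C}^{n} \setminus \{0\}$ satisfies $Ax^{m-1} = \lambda x^{[m-1]}$, then expanding $(sI - B)x^{m-1} = sx^{[m-1]} - Bx^{m-1}$ gives $Bx^{m-1} = (s-\lambda) x^{[m-1]}$, so that $s - \lambda \in \sigma(B)$.

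Next I would invoke the Perron--Frobenius bound for nonnegative tensors, namely that every eigenvalue $\mu$ of a nonnegative tensor $B$ satisfies $|\mu| \leq \rho(B)$ (the analogue of the matrix fact, established by Yang and Yang and recorded in the standard references on nonnegative tensors cited in the paper). Applying this to $\mu = s - \lambda$ yields the single scalar inequality $|s - \lambda| \leq \rho(B)$.

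From here it is only a real/imaginary-part manipulation. Writing $\lambda = \alpha + i\beta$, the inequality $|s - \lambda|^{2} = (s - \alpha)^{2} + \beta^{2} \leq \rho(B)^{2}$ already forces $(s - \alpha)^{2} \leq \rho(B)^{2}$, i.e., $|s - \alpha| \leq \rho(B)$. If $\alpha \leq s$ this rearranges to $\alpha \geq s - \rho(B)$; if $\alpha > s$ then trivially $\alpha > s \geq \rho(B) \geq 0 \geq s - \rho(B)$. In either case $\operatorname{Re}\lambda \geq s - \rho(B)$, which is $\geq 0$ in the M-tensor case and $> 0$ in the strong M-tensor case, giving both halves of the claim simultaneously.

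The only nontrivial ingredient is the Perron--Frobenius spectral radius bound for nonnegative tensors, and I expect that to be the one place where care is needed: unlike the matrix setting, $\rho(B)$ for a tensor is defined a priori as $\max_{\lambda \in \sigma(B)} |\lambda|$, so the bound $|\mu| \leq \rho(B)$ is tautological here, and what one actually needs is that this quantity is finite and attained (so that $s \geq \rho(B)$ is a meaningful hypothesis). This is supplied by the standard nonnegative-tensor theory, after which the argument above is purely algebraic.
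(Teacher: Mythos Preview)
The paper does not supply its own proof of this proposition; it is quoted verbatim as \cite[Theorem 3.3]{MR3190753} and used as a black box. Your argument is correct and is essentially the standard one found in that reference: the spectral shift $\sigma(A) = s - \sigma(B)$ together with the tautological bound $|s-\lambda| \leq \rho(B)$ immediately gives $\operatorname{Re}\lambda \geq s - \rho(B)$.

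Two small remarks. First, your case split on $\alpha \leq s$ versus $\alpha > s$ is unnecessary: from $|s-\alpha| \leq \rho(B)$ one gets directly $s - \rho(B) \leq \alpha$. Second, your closing paragraph is slightly tangled. You correctly observe that $|\mu| \leq \rho(B)$ is tautological given the definition $\rho(B) = \max_{\mu \in \sigma(B)} |\mu|$, so no Perron--Frobenius input is needed there. What is needed is only that $\sigma(B)$ is a finite nonempty set so that $\rho(B)$ is a well-defined real number and the hypothesis $s \geq \rho(B)$ makes sense; this is Qi's result that the eigenvalues of an $m$-order $n$-dimensional tensor are the roots of a characteristic polynomial, hence finite in number. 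With that clarification the proof is clean.
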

\begin{proposition}[{\cite[Theorem 3.15]{MR3190753}}]
	\label{prop:zhang_2}If $A$ is a w.d.d. Z-tensor with nonnegative diagonals, then $A$ is an M-tensor.
\end{proposition}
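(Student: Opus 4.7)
The plan is to construct the splitting $A = sI - B$ explicitly and then bound $\rho(B)$ from above by $s$. First, set $s := \max_i a_{i\cdots i}$, which is nonnegative by the hypothesis that the diagonals of $A$ are nonnegative. Define $B := sI - A$. Because $A$ is a Z-tensor, every off-diagonal entry $b_{i i_{2} \cdots i_{m}} = -a_{i i_{2} \cdots i_{m}}$ is nonnegative, while the diagonal entries $b_{i \cdots i} = s - a_{i \cdots i}$ are nonnegative by the choice of $s$. Hence $B$ is a nonnegative tensor, and it remains only to verify that $s \geq \rho(B)$.

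Next I would estimate the row sums of $B$. For each $i$,
\begin{equation*}
R_{i}(B) := \sum_{i_{2}, \ldots, i_{m}} b_{i i_{2} \cdots i_{m}} = (s - a_{i \cdots i}) + \sum_{(i_{2}, \ldots, i_{m}) \neq (i, \ldots, i)} \left| a_{i i_{2} \cdots i_{m}} \right|,
\end{equation*}
since the off-diagonal entries of $A$ are nonpositive. The weak diagonal dominance of $A$, combined with $a_{i\cdots i} \geq 0$, bounds the trailing sum by $a_{i\cdots i}$, so $R_{i}(B) \leq s$ for every $i$.

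The final step is to upgrade this row-sum bound to a spectral-radius bound. For a nonnegative tensor there is a Perron--Frobenius-type inequality $\rho(B) \leq \max_{i} R_{i}(B)$, analogous to the corresponding classical fact for nonnegative matrices. Given that bound, we immediately get $\rho(B) \leq \max_{i} R_{i}(B) \leq s$, so $A = sI - B$ exhibits $A$ as an M-tensor.

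The main obstacle is the nonnegative-tensor Perron--Frobenius row-sum bound on $\rho(B)$, which is not elementary. The cleanest route is to invoke the existence of a nonnegative Perron eigenvector $x \geq 0$, $x \neq 0$, satisfying $Bx^{m-1} = \rho(B)\, x^{[m-1]}$, and then to pick an index $i^{*}$ with $x_{i^{*}} = \max_{i} x_{i} > 0$. Reading off the $i^{*}$-th coordinate of the eigenvalue equation, using $x_{i_{k}} \leq x_{i^{*}}$ and $B \geq 0$, yields $\rho(B)\, x_{i^{*}}^{m-1} \leq R_{i^{*}}(B)\, x_{i^{*}}^{m-1}$, hence $\rho(B) \leq R_{i^{*}}(B) \leq s$. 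Everything else in the argument is bookkeeping about the Z-tensor/w.d.d. structure.
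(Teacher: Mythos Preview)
The paper does not supply its own proof of this proposition; it is quoted directly from \cite[Theorem 3.15]{MR3190753} and used as a black box in the proof of \cref{thm:wcdd}. So there is no in-paper argument to compare against, and your proposal should be judged on its own merits.

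Your argument is correct and is essentially the standard one: split $A = sI - B$ with $s = \max_i a_{i\cdots i}$, verify $B \geq 0$ using the Z-tensor and nonnegative-diagonal hypotheses, bound the row sums $R_i(B) \leq s$ via weak diagonal dominance, and conclude $\rho(B) \leq s$. One minor simplification: you do not actually need a nonnegative Perron eigenvector for the last step. For \emph{any} eigenpair $(\lambda, x)$ of $B$ with $\|x\|_\infty = |x_{i^*}| > 0$, the $i^*$-th coordinate of $Bx^{m-1} = \lambda x^{[m-1]}$ together with $B \geq 0$ gives
\[
|\lambda|\,|x_{i^*}|^{m-1} \leq \sum_{i_2,\ldots,i_m} b_{i^* i_2 \cdots i_m}\, |x_{i_2}|\cdots |x_{i_m}| \leq R_{i^*}(B)\,|x_{i^*}|^{m-1},
\]
so $|\lambda| \leq R_{i^*}(B) \leq s$. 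This is just the tensor Gershgorin bound (cf.\ \cite{MR2178089}), and it sidesteps the question of whether a general nonnegative tensor admits a nonnegative eigenvector at $\rho(B)$, which is itself a nontrivial Perron--Frobenius-type statement.
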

Our proof also relies on a corollary to the following result:
\begin{proposition}[{\cite[Pg. 697]{MR3519197}}]
	\label{prop:ding}Let $A$ be a strong M-tensor. For each positive vector $b$ (of compatible size), there exists a unique positive vector $x$ which solves the tensor equation $Ax^{m-1}=b$. Denoting by $A_{++}^{-1}:\mathbb{R}_{++}^{n}\rightarrow\mathbb{R}_{++}^{n}$ the mapping from positive right hand sides $b$ to positive solutions $x$, $A_{++}^{-1}$ is nondecreasing with respect to the coordinatewise order.
\end{proposition}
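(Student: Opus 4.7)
The plan is to write $A = sI - B$ with $B$ a nonnegative tensor and $s > \rho(B)$, and recast $Ax^{m-1} = b$ as the fixed-point equation $x = T(x)$ on $\mathbb{R}_{+}^{n}$, where
$T(x)_{i} = \bigl(s^{-1}(Bx^{m-1})_{i} + s^{-1}b_{i}\bigr)^{1/(m-1)}$. Since $B \geq 0$ componentwise, $T$ is continuous and componentwise nondecreasing. Moreover, because $b > 0$, a direct computation gives $(\lambda T(x))_{i}^{m-1} - T(\lambda x)_{i}^{m-1} = s^{-1}(\lambda^{m-1} - 1)b_{i} > 0$ for $\lambda > 1$, so $T$ is strictly subhomogeneous: $T(\lambda x) < \lambda T(x)$ componentwise whenever $\lambda > 1$ and $x \geq 0$. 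These two properties will drive both existence and uniqueness.

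For existence I would start the iteration at $x^{(0)} = 0$, producing a monotone increasing sequence $x^{(k+1)} := T(x^{(k)})$ whose first iterate $x^{(1)} = (b/s)^{[1/(m-1)]}$ is already strictly positive. The main obstacle is exhibiting a positive cap $w$ with $T(w) \leq w$, equivalently $Aw^{m-1} \geq b$. For this I would invoke the standard characterization of strong M-tensors from \cite{MR3190753}: there exists a positive vector $v$ with $Av^{m-1}$ componentwise positive. Scaling, $w = \lambda v$ for sufficiently large $\lambda > 0$ satisfies $Aw^{m-1} = \lambda^{m-1}Av^{m-1} \geq b$. Induction then gives $x^{(k)} \leq w$, so the sequence converges to some $x^{*}$ with $x^{(1)} \leq x^{*} \leq w$, and continuity of $T$ yields $T(x^{*}) = x^{*}$, i.e., $A(x^{*})^{m-1} = b$ with $x^{*} > 0$.

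Uniqueness is a standard subhomogeneous fixed-point argument. Given two positive solutions $x,y$, set $t^{*} = \inf\{t \geq 1 : x \leq ty\}$, which is finite because $y > 0$. If $t^{*} > 1$, then $x_{i} = t^{*}y_{i}$ for some coordinate $i$, while monotonicity of $T$ and strict subhomogeneity yield $x_{i} = T(x)_{i} \leq T(t^{*}y)_{i} < t^{*}T(y)_{i} = t^{*}y_{i}$, a contradiction. Hence $t^{*} = 1$, so $x \leq y$, and by symmetry $x = y$.

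Finally, monotonicity of $A_{++}^{-1}$ falls out of the same iteration scheme. If $0 < b \leq b'$ and $T_{b}, T_{b'}$ denote the corresponding operators, then $T_{b} \leq T_{b'}$ pointwise; a straightforward induction on the iterates from $x^{(0)} = 0$ gives $x^{(k)} \leq x'^{(k)}$, and passing to the limit, together with the just-established uniqueness used to identify the limits with $A_{++}^{-1}(b)$ and $A_{++}^{-1}(b')$, yields $A_{++}^{-1}(b) \leq A_{++}^{-1}(b')$.
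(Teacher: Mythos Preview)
Your argument is correct. The paper does not prove \cref{prop:ding} in its body; the result is quoted from \cite{MR3519197}, and your fixed-point construction via the monotone, strictly subhomogeneous operator $T(x)=(s^{-1}(Bx^{m-1}+b))^{[1/(m-1)]}$ is precisely the approach of that reference for existence and uniqueness. For the monotonicity of $A_{++}^{-1}$, the paper's source contains (inside a \texttt{comment} block) a slightly different argument: rather than comparing iterates from zero and passing to the limit, it takes positive solutions $y$ of $Ay^{m-1}=b$ and $x$ of $Ax^{m-1}=b+\epsilon$, sets $\eta=\min_{i}x_{i}/y_{i}$, supposes $\eta<1$, and derives the contradiction $\eta y_{j}<T_{b}(\eta y)_{j}\leq T_{b}(x)_{j}\leq T_{b+\epsilon}(x)_{j}=x_{j}=\eta y_{j}$ at a minimizing index $j$. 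Both routes rest on the same operator and the same monotonicity/subhomogeneity ingredients; the direct comparison avoids having to identify the iteration limit with the unique positive solution, while your route reuses the existence machinery already in hand. One minor citation point: the semi-positivity characterization you invoke (existence of $v>0$ with $Av^{m-1}>0$) is referenced in the paper via \cite[Theorem~3]{MR3116429}, not \cite{MR3190753}.
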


The corollary, which is of independent interest, establishes existence (but not uniqueness) of nonnegative solution $x$ to the tensor equation $Ax^{m-1}=b$ when $b$ is a nonnegative vector and $A$ is a strong M-tensor.
\begin{corollary}
	\label{cor:ding_corollary}Let $A$ be a strong M-tensor. Then, there exists a nondecreasing map $A_{+}^{-1}:\mathbb{R}_{+}^{n}\rightarrow\mathbb{R}_{+}^{n}$ which associates to each nonnegative right hand side $b$ a nonnegative solution $x$ of the tensor equation $Ax^{m-1}=b$.
\end{corollary}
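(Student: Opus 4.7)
The plan is to define $A_{+}^{-1}(b)$ by a limiting procedure based on the already-established map $A_{++}^{-1}$ from \cref{prop:ding}. Fix $b\in\mathbb{R}_{+}^{n}$ and let $e=(1,\ldots,1)$ denote the vector of ones. For each positive integer $k$, the perturbed right-hand side $b_{k}:=b+k^{-1}e$ lies in $\mathbb{R}_{++}^{n}$, so \cref{prop:ding} furnishes a positive vector $x_{k}:=A_{++}^{-1}(b_{k})$ solving $Ax_{k}^{m-1}=b_{k}$.

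The main step is to extract a limit. Since $b_{k+1}\leq b_{k}$ coordinatewise and $A_{++}^{-1}$ is nondecreasing, the sequence $\{x_{k}\}$ is coordinatewise nonincreasing and bounded below by zero. Each coordinate therefore converges monotonically in $\mathbb{R}$, yielding a limit $x\in\mathbb{R}_{+}^{n}$. The map $y\mapsto Ay^{m-1}$ is a polynomial and hence continuous, so passing to the limit in $Ax_{k}^{m-1}=b_{k}$ gives $Ax^{m-1}=b$. I would then set $A_{+}^{-1}(b):=x$.

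For the nondecreasing property, I would take $b\leq b'$ in $\mathbb{R}_{+}^{n}$, observe that $b+k^{-1}e\leq b'+k^{-1}e$ in $\mathbb{R}_{++}^{n}$, apply the monotonicity clause of \cref{prop:ding} to get $A_{++}^{-1}(b+k^{-1}e)\leq A_{++}^{-1}(b'+k^{-1}e)$ for every $k$, and pass to the limit to conclude $A_{+}^{-1}(b)\leq A_{+}^{-1}(b')$.

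I do not anticipate a serious obstacle: the argument is a clean continuity-and-monotonicity extension from the open orthant $\mathbb{R}_{++}^{n}$ to its closure $\mathbb{R}_{+}^{n}$, with the three required ingredients---existence of positive solutions, continuity of the tensor polynomial, and monotonicity of $A_{++}^{-1}$---all delivered by \cref{prop:ding}. The one subtlety worth noting is that the resulting map $A_{+}^{-1}$ need not be canonical, since the corollary only asserts existence (and not uniqueness) of a nonnegative solution when $b$ has vanishing components; this is fully consistent with the statement being proved.
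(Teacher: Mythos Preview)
Your proposal is correct and follows essentially the same approach as the paper: both define $A_{+}^{-1}(b)$ as the coordinatewise limit of $A_{++}^{-1}(b+k^{-1}e)$, use the monotonicity of $A_{++}^{-1}$ to obtain a nonincreasing sequence bounded below by zero, and pass to the limit via continuity of $y\mapsto Ay^{m-1}$. Your write-up is in fact slightly more explicit than the paper's in spelling out the monotonicity of $A_{+}^{-1}$.
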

\begin{proof}
	For $k\geq1$, define
	\[
		x_{(k)}=A_{++}^{-1}(b+1/k)
	\]
	where $b+1/k$ is the vector obtained by adding $1/k$ to each entry
	of $b$. Since $A_{++}^{-1}$ is nondecreasing, the sequence $(x_{(k)})_{k}$
	is nonincreasing with respect to the coordinatewise order. Moreover,
	since each $x_{(k)}$ is a nonnegative vector, this sequence is bounded
	below by the zero vector and hence has a limit $x$. Taking $k\rightarrow\infty$
	in
	\[
		Ax_{(k)}^{m-1}=b+1/k
	\]
	and employing the continuity of the map $y\mapsto Ay^{m-1}$, we obtain
	$Ax^{m-1}=b$.
	
	The above implies that the map $A_{+}^{-1}:\mathbb{R}_{+}^{n}\rightarrow\mathbb{R}_{+}^{n}$
	\[
		A_{+}^{-1}(b)=\lim_{k\rightarrow\infty}A_{++}^{-1}(b+1/k)
	\]
	is well-defined and associates to each nonnegative vector $b$ a nonnegative
	solution $x$ of the tensor equation $Ax^{m-1}=b$. That $A_{+}^{-1}$
	is nondecreasing is an immediate consequence of $A_{++}^{-1}$ being
	nondecreasing.
\end{proof}
We require one last intermediate result, which captures the invariance
of spectra under permutation. It can be thought of as a generalization
of the fact that for any square matrix $A$ and permutation matrix
$P$ of compatible size, $\sigma(A)=\sigma(PAP^{\intercal})$.
\begin{lemma}
	\label{lem:permutation_invariance}Let $A=(a_{i_{1}\cdots i_{m}})$
	be a tensor, $\pi$ be a permutation of $\{1,\ldots,n\}$, and $A^{\prime}=(a_{i_{1}\cdots i_{m}}^{\prime})$
	be the tensor with entries
	\[
		a_{i_{1}\cdots i_{m}}^{\prime}=a_{\pi(i_{1})\cdots\pi(i_{m})}.
	\]
	Then, $\sigma(A)=\sigma(A^{\prime})$.
\end{lemma}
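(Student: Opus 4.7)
The plan is to show $\sigma(A)\subseteq\sigma(A^{\prime})$ directly by exhibiting, for each eigenpair $(\lambda,x)$ of $A$, a corresponding eigenpair $(\lambda,y)$ of $A^{\prime}$ with the same eigenvalue. The reverse inclusion then follows by the symmetric role of $\pi$ and $\pi^{-1}$ (the construction below applied to $A^{\prime}$ and $\pi^{-1}$ recovers $A$). Thus the whole proof reduces to one direction.

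Given $Ax^{m-1}=\lambda x^{[m-1]}$ with $x\neq0$, I would define the candidate eigenvector $y\in\mathbb{C}^{n}$ by $y_{i}=x_{\pi(i)}$. Since $\pi$ is a bijection of $\{1,\ldots,n\}$, $y$ is simply a reindexing of $x$ and in particular $y\neq0$. The verification that $y$ is an eigenvector of $A^{\prime}$ with eigenvalue $\lambda$ is a direct unwinding of the definition of the product $A^{\prime}y^{m-1}$ via \eqref{eq:polynomial}, followed by a single change of summation variables $j_{k}=\pi(i_{k})$ for $k=2,\ldots,m$. This change of variables is legitimate because $\pi$ is a bijection, and after it the inner sum becomes exactly the $\pi(i)$-th entry of $Ax^{m-1}$, which by hypothesis equals $\lambda x_{\pi(i)}^{m-1}=\lambda y_{i}^{m-1}$. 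Hence $A^{\prime}y^{m-1}=\lambda y^{[m-1]}$, showing $\lambda\in\sigma(A^{\prime})$.

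There is essentially no obstacle here beyond keeping the bookkeeping straight: one must apply $\pi$ simultaneously to every index $i_{1},\ldots,i_{m}$ in the tensor and to every coordinate of $x$ that appears in the monomial, so that the permutation cancels out in the inner product. The coordinatewise power $y^{[m-1]}$ behaves well under this reindexing for the same reason (raising each entry to the $(m-1)$-st power commutes with the bijective relabelling of entries). Consequently the argument is purely combinatorial and uses nothing beyond the definitions of eigenvalue and tensor-vector product.
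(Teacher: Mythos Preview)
Your proposal is correct and follows essentially the same route as the paper: define $y_{i}=x_{\pi(i)}$, reindex the sum via the bijection $\pi$, and read off $A^{\prime}y^{m-1}=\lambda y^{[m-1]}$. You are slightly more explicit than the paper in noting that $y\neq0$ and that the reverse inclusion follows by applying the same construction with $\pi^{-1}$, but the argument is otherwise identical.
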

\begin{proof}
	Let $\lambda$ be an eigenvalue of $A$ with corresponding eigenvector
	$x$. Let $y$ denote the vector whose entries are $y_{i}=x_{\pi(i)}$.
	Then, for each $i$,
	\belowdisplayskip=-12pt
	\begin{align*}
		\lambda(y_{i})^{m-1}=\lambda(x_{\pi(i)})^{m-1} & =\sum_{i_{2},\ldots,i_{m}}a_{\pi(i)i_{2}\cdots i_{m}}x_{i_{2}}\cdots x_{i_{m}}                    \\
													   & =\sum_{i_{2},\ldots,i_{m}}a_{\pi(i)\pi(i_{2})\cdots\pi(i_{m})}x_{\pi(i_{2})}\cdots x_{\pi(i_{m})} \\
													   & =\sum_{i_{2},\ldots,i_{m}}a_{ii_{2}\cdots i_{m}}^{\prime}y_{i_{2}}\cdots y_{i_{m}}.
	\end{align*}\qedhere
\end{proof}
We are now ready to prove \cref{thm:wcdd}. We split the proof
into parts. In each part, we use $A=(a_{i_{1}\cdots i_{m}})$ to denote
a w.d.d. Z-tensor with nonnegative diagonals.
\begin{proof}[Proof of \ref{enu:1} implies \ref{enu:2}]
	This follows directly from \cref{prop:zhang_1}.
\end{proof}
\begin{proof}[Proof of \ref{enu:3} implies \ref{enu:1}]
	Suppose $A$ is w.c.d.d. Therefore, $A$ is w.d.d. by definition,
	and hence $A$ is an M-tensor by \cref{prop:zhang_2}. Let $\lambda$
	and $x$ be an eigenvalue-eigenvector pair of $A$. We may, without
	loss of generality, assume $\left\Vert x\right\Vert _{\infty}=1$
	(otherwise, define a new vector $y=x/\Vert x\Vert_{\infty}$ and note
	that $Ay^{m-1}=\lambda y^{[m-1]}$). Since $A$ is an M-tensor, $\operatorname{Re}\lambda\geq0$
	by \cref{prop:zhang_1}. In order to arrive at a contradiction,
	suppose $\operatorname{Re}\lambda=0$.
	
	Now, fix $i$ such that $|x_{i}|=\Vert x\Vert_{\infty}=\max_{j}|x_{j}|$.
	It follows that
	\[
		\left|\lambda-a_{i\cdots i}\right|\leq\sum_{(i_{2},\ldots,i_{m})\neq(i,\ldots,i)}\left|a_{ii_{2}\cdots i_{m}}\right|\left|x_{i_{2}}\right|\cdots\left|x_{i_{m}}\right|\leq\sum_{(i_{2},\ldots,i_{m})\neq(i,\ldots,i)}\left|a_{ii_{2}\cdots i_{m}}\right|.
	\]
	Since $\operatorname{Re}\lambda=0$ and $a_{i\cdots i}$ is real,
	\[
		\left|a_{i\cdots i}\right|=\left|\operatorname{Re}\lambda-a_{i\cdots i}\right|\leq\left|\lambda-a_{i\cdots i}\right|.
	\]
	Combining the above inequalities,
	\[
		\left|a_{i\cdots i}\right|\leq\sum_{(i_{2},\ldots,i_{m})\neq(i,\ldots,i)}\left|a_{ii_{2}\cdots i_{m}}\right|\left|x_{i_{2}}\right|\cdots\left|x_{i_{m}}\right|\leq\sum_{(i_{2},\ldots,i_{m})\neq(i,\ldots,i)}\left|a_{ii_{2}\cdots i_{m}}\right|.
	\]
	Since $A$ is w.d.d., the above chain of inequalities holds with equality
	so that $i\notin J(A)$ and $|x_{i_{2}}|=\cdots=|x_{i_{m}}|=1$ whenever
	$|a_{ii_{2}\cdots i_{m}}|\neq0$ for some $(i_{2},\ldots,i_{m})$.
	
	Since $A$ is w.c.d.d., we may pick a walk $i^{1}\rightarrow i^{2}\rightarrow\cdots\rightarrow i^{k}$
	starting at some row $i^{1}$ for which $|x_{i^{1}}|=1$ and ending
	at some row $i^{k}\in J(A)$. Setting $i=i^1$ in the previous paragraph, we get $|x_{i^{2}}|=1$ and therefore also $i^2\notin J(A)$. Applying this reasoning inductively, we conclude
	that $i^{k}\notin J(A)$, a contradiction.
\end{proof}
\begin{proof}[Proof of \ref{enu:2} implies \ref{enu:3}]
	Suppose $A$ is not w.c.d.d. Proceeding by contrapositive, it is
	sufficient to show that $\lambda=0$ is an eigenvalue of $A$. Let
	\[
		W(A)=\left\{ i^{1}\notin J(A)\colon\text{there exists a walk }i^{1}\rightarrow i^{2}\rightarrow\cdots\rightarrow i^{k}\text{ such that }i^{k}\in J(A)\right\} .
	\]
	Let $R(A)=\{1,\ldots,n\}\setminus(J(A)\cup W(A))$. Since $A$ is
	not w.c.d.d., $R(A)$ is nonempty. By \cref{lem:permutation_invariance},
	we may assume that $R(A)=\{1,\ldots,r\}$ for some $1\leq r\leq n$
	(otherwise, permute the indices appropriately). For the remainder
	of the proof, let $e=(1,\ldots,1)^{\intercal}$ be the vector of ones
	in $\mathbb{R}^{r}$.
	
	If $r=n$, it follows that $J(A)$ is empty and hence every row of
	$A$ is not s.d.d. Since $A$ is a w.d.d. Z-tensor, we have, for each
	row $i$,
	\begin{equation}
		a_{i\cdots i}=-\sum_{(i_{2},\ldots,i_{m})\neq(i,\ldots,i)}a_{ii_{2}\cdots i_{m}}.\label{eq:singularity}
	\end{equation}
	In other words, $Ae^{m-1}=0$, and hence $\lambda=0$ is an eigenvalue
	of $A$.
	
	If $r<n$, the adjacency graph has the structure shown in \cref{fig:graph}.
	In particular, there are no edges from vertices $i\in R(A)$ to vertices
	$j\notin R(A)$ since if there were, $i$ would not be a member of
	$R(A)$ by definition. This implies that
	\[
		a_{ii_{2}\cdots i_{m}}=0\qquad\text{if }i\in R(A)\text{ and }i_{k}\notin R(A)\text{ for some }2\leq k\leq m.
	\]
	Equivalently,
	\begin{equation}
		a_{ii_{2}\cdots i_{m}}=0\qquad\text{if }i\leq r\text{ and }\max\{i_{2},\ldots,i_{m}\}>r.\label{eq:zero_block}
	\end{equation}
	Define the $m$-order and $n$-dimensional tensor $B=(b_{i_{1}\cdots i_{m}})$
	by
	\[
		b_{i_{1}\cdots i_{m}}=\begin{cases}
		a_{i_{1}\cdots i_{m}} & \text{if }1\leq i_{1},\ldots,i_{m}\leq r\\
		0 & \text{otherwise}.
		\end{cases}
	\]
	By \eqref{eq:zero_block}, it follows that $B$ is a w.d.d. Z-tensor
	with no s.d.d. rows and hence similarly to \eqref{eq:singularity},
	we can establish that for any vector $w$ in $\mathbb{R}^{n-r}$,
	\begin{equation}
		Bx^{m-1}=0\qquad\text{where}\qquad x=\begin{pmatrix}e\\
		w
		\end{pmatrix}.\label{eq:top_left}
	\end{equation}
	Define the $m$-order and $n$-dimensional tensors $C=(c_{i_{1}\cdots i_{m}})$
	and $D=(d_{i_{1}\cdots i_{m}})$ by
	\[
		c_{i_{1}\cdots i_{m}}=\begin{cases}
		1 & \text{if }1\leq i_{1}\leq r\text{ and }(i_{2},\ldots,i_{m})=(i_{1},\ldots,i_{1})\\
		0 & \text{otherwise}.
		\end{cases}
	\]
	and
	\[
		d_{i_{1}\cdots i_{m}}=\begin{cases}
		0 & \text{if }1\leq i_{1}\leq r\\
		a_{i_{1}\cdots i_{m}} & \text{otherwise}.
		\end{cases}
	\]
	By construction, $C+D$ is a w.c.d.d. Z-tensor with nonnegative diagonals.
	Since we have already proven \ref{enu:3} implies \ref{enu:1} in
	\cref{thm:wcdd}, we conclude that $C+D$ is a strong M-tensor.
	By \cref{cor:ding_corollary}, we can find a nonzero vector $y$
	such that
	\begin{equation}
		(C+D)y^{m-1}=\begin{pmatrix}e\\
		0
		\end{pmatrix}.\label{eq:bottom_right}
	\end{equation}
	Note that if $1\leq i\leq r$, the above implies
	\[
		\sum_{i_{2}\cdots i_{m}}(c_{ii_{2}\cdots i_{m}}+d_{ii_{2}\cdots i_{m}})y_{i_{2}}\cdots y_{i_{m}}=y_{i}^{m-1}=1.
	\]
	Therefore,
	\[
		y=\begin{pmatrix}e\\
		w
		\end{pmatrix}
	\]
	for some vector $w$ in $\mathbb{R}^{n-r}$. Since $A=B+(C+D)-C$,
	\eqref{eq:top_left} and \eqref{eq:bottom_right} imply
	\[
		Ay^{m-1}=By^{m-1}+(C+D)y^{m-1}-Cy^{m-1}=0+\begin{pmatrix}e\\
		0
		\end{pmatrix}-\begin{pmatrix}e\\
		0
		\end{pmatrix}=0,
	\]
	so that $\lambda=0$ is an eigenvalue of $A$.
\end{proof}
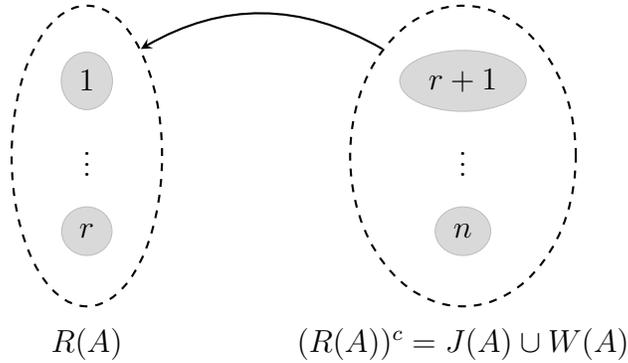
\begin{figure}[ht]
	\centering
	\begin{tikzpicture}
		\node [draw=black!25, ellipse, fill=black!15] (0) {$1$};
		\node [below of=0] (Rdots) {$\vdots$};
		\node [draw=black!25, ellipse, fill=black!15, below of=Rdots] (r) {$r$};
		\node [draw=black!25, ellipse, fill=black!15] at (5, 0) (r1) {$r+1$};
		\node [below of=r1] (SWdots) {$\vdots$};
		\node [draw=black!25, ellipse, fill=black!15, below of=SWdots] (M) {$n$};
		\node [ellipse, thick, dashed, draw, minimum width=2cm, minimum height=4cm] at (Rdots) (R) {};
		\node [ellipse, thick, dashed, draw, minimum width=3cm, minimum height=4cm] at (SWdots) (SW) {};
		\path [thick, ->, >=stealth] (SW.north west) edge [bend right] (R.north east);
		\node [below of=R, yshift=-1.5cm] {$R(A)$};
		\node [below of=SW, yshift=-1.5cm] {$(R(A))^c = J(A) \cup W(A)$};
	\end{tikzpicture}
	
	\caption{Adjacency graph in the proof of \cref{thm:wcdd}\label{fig:graph}}
\end{figure}

\section{\label{sec:bellman}High order Bellman equations}

We now return to the high order Bellman equation \eqref{eq:bellman},
repeated below for the reader's convenience:
\[
\min_{P\in\mathcal{P}}\left\{ A(P)u^{m-1}-b(P)\right\} =0.
\]
In the above, $A(P)=(a_{i_{1}\cdots i_{m}}(P))$ is an $m$-order
and $n$-dimensional real tensor and $b(P)=(b_{1}(P),\ldots,b_{n}(P))$
is a vector in $\mathbb{R}^{n}$. It is understood that (cf. \cite{MR3493959})
\begin{enumerate}[label=(\emph{\roman*})]
\item $\mathcal{P}=\mathcal{P}_{1}\times\cdots\times\mathcal{P}_{n}$ is
a finite product of nonempty sets. That is, each $P=(P_{1},\ldots,P_{n})$
in $\mathcal{P}$ is an $n$-tuple with $P_{i}$ in $\mathcal{P}_{i}$.
\item \label{enu:decoupled}Policies are ``row-decoupled''. That is, for
any two policies $P$ and $P^{\prime}$ in $\mathcal{P}$, $a_{ii_{2}\cdots i_{m}}(P)=a_{ii_{2}\cdots i_{m}}(P^{\prime})$
and $b_{i}(P)=b_{i}(P^{\prime})$ whenever $P_{i}=P_{i}^{\prime}$.
In other words, the $i$-th row of $A(P)$ and $b(P)$ are determined
solely by $P_{i}$.
\item \label{enu:coordinatewise}Infimums (and other extrema) are taken
with respect to the coordinatewise order. That is, for $\{y^{(\alpha)}\}_{\alpha}\subset\mathbb{R}^{n}$,
$\inf_{\alpha}y^{(\alpha)}$ is a vector whose $i$-th entry is $\inf_{\alpha}y_{i}^{(\alpha)}$.
For example, $\min\{(\begin{smallmatrix}1\\
0
\end{smallmatrix}),(\begin{smallmatrix}0\\
1
\end{smallmatrix})\}=(\begin{smallmatrix}0\\
0
\end{smallmatrix})$.
\end{enumerate}
We require the following assumptions to study the problem:
\begin{enumerate}[label=(H\arabic*)]
\item \label{enu:positive}$b(P)$ is a positive vector for each $P$ in
$\mathcal{P}$.
\item \label{enu:continuous}$\mathcal{P}$ is a compact topological space
and $A:\mathcal{P}\rightarrow\mathbb{R}^{n^{m}}$ and $b:\mathcal{P}\rightarrow\mathbb{R}^{n}$
are continuous functions.
\end{enumerate}
In practice, $\mathcal{P}$ is usually finite (\cref{rem:finite_disclaimer}) in which case \ref{enu:continuous} is trivially satisfied.

\subsection{Existence and uniqueness}

We now establish existence and uniqueness of positive solutions to
\eqref{eq:bellman}.
\begin{lemma}[Uniqueness]
\label{lem:uniqueness}Suppose \ref{enu:positive}, \ref{enu:continuous},
and $A(P)$ is a strong M-tensor for each $P$ in $\mathcal{P}$.
Then, there is at most one positive solution $u$ of \eqref{eq:bellman}.
\end{lemma}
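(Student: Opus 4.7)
The plan is to exploit the monotonicity of the inverse map $A(P)_{++}^{-1}$ for strong M-tensors, provided by \cref{prop:ding}, combined with the row-decoupling property \ref{enu:decoupled} and a standard selection-of-minimizer argument.

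First, I would observe that for any positive vector $u$ the infimum in \eqref{eq:bellman} is actually attained at some $P^{\star}(u)\in\mathcal{P}$. Using continuity \ref{enu:continuous} and compactness of $\mathcal{P}$, each coordinate problem $\min_{P_i\in\mathcal{P}_i}\{(A(P)u^{m-1})_i - b_i(P)\}$ has a minimizer, and by the row-decoupling property \ref{enu:decoupled} these per-row minimizers can be assembled into a single $P^{\star}\in\mathcal{P}$ satisfying $A(P^{\star})u^{m-1}=b(P^{\star})$ and $A(P)u^{m-1}\geq b(P)$ for every $P\in\mathcal{P}$.

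Now suppose $u$ and $v$ are two positive solutions of \eqref{eq:bellman}. Take $P^{\star}=P^{\star}(u)$ as above. Since $v$ is also a solution, $A(P^{\star})v^{m-1}\geq b(P^{\star})=A(P^{\star})u^{m-1}$. Write $c=A(P^{\star})v^{m-1}$; by \ref{enu:positive}, $c\geq b(P^{\star})>0$, so $c$ is a positive vector. Because $A(P^{\star})$ is a strong M-tensor, \cref{prop:ding} gives uniqueness of a positive solution of $A(P^{\star})x^{m-1}=c$, and therefore $v=A(P^{\star})_{++}^{-1}(c)$ and $u=A(P^{\star})_{++}^{-1}(b(P^{\star}))$. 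The monotonicity of $A(P^{\star})_{++}^{-1}$ in \cref{prop:ding} then yields $v\geq u$. Swapping the roles of $u$ and $v$ (using a minimizer $Q^{\star}=P^{\star}(v)$ in place of $P^{\star}$) gives $u\geq v$, whence $u=v$.

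The only nontrivial step is the attainment of the minimum with row-decoupled minimizers, and this follows cleanly from \ref{enu:decoupled} and \ref{enu:continuous}; after that, the uniqueness is essentially a one-line consequence of the monotone invertibility of strong M-tensors on the positive orthant provided by \cref{prop:ding}. There is no real obstacle, provided one is careful to verify that $A(P^{\star})v^{m-1}$ is positive (so that $A(P^{\star})_{++}^{-1}$ rather than only $A(P^{\star})_{+}^{-1}$ from \cref{cor:ding_corollary} can be invoked, which is what guarantees uniqueness of the preimage and hence the monotone-inversion step).
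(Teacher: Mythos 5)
Your proof is correct and takes essentially the same approach as the paper: find a minimizing policy $P^{\star}$ for $u$ (via compactness, continuity, and row-decoupling), observe that $A(P^{\star})u^{m-1}=b(P^{\star})\leq A(P^{\star})v^{m-1}$ with both sides positive, and apply the monotone inverse $A(P^{\star})_{++}^{-1}$ from \cref{prop:ding} before swapping the roles of $u$ and $v$. The paper states this slightly more compactly, but the logic and ingredients are identical.
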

\begin{proof}
Let $u$ and $w$ be two positive solutions of \eqref{eq:bellman}.
By the compactness of $\mathcal{P}$ and continuity of $A$ and $b$,
we can find $P^{*}$ such that
\[
A(P^{*})u^{m-1}-b(P^{*})=0=\min_{P\in\mathcal{P}}\left\{ A(P)w^{m-1}-b(P)\right\} \leq A(P^{*})w^{m-1}-b(P^{*}).
\]
Therefore,
\[
0<A(P^{*})u^{m-1}\leq A(P^{*})w^{m-1}.
\]
Using the fact that $(A(P^{*}))_{++}^{-1}$ is nondecreasing, applying
the function $(A(P^{*}))_{++}^{-1}$ to the above inequality yields
$u\leq w$. Reversing the roles of $u$ and $w$, we obtain the reverse
inequality.
\end{proof}
\begin{lemma}[Existence I]
\label{lem:existence}Suppose \ref{enu:positive}, \ref{enu:continuous},
and $A(P)$ is an s.d.d. M-tensor for each $P$ in $\mathcal{P}$.
Then, there exists a positive solution $u$ of \eqref{eq:bellman}.
\end{lemma}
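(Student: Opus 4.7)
The plan is to construct the solution via a policy iteration scheme, mirroring the classical matrix-case argument. Pick any $P^{(0)}\in\mathcal{P}$ and, inductively, let $u^{(k)}$ be the unique positive solution of $A(P^{(k)})(u^{(k)})^{m-1}=b(P^{(k)})$; its existence and uniqueness follow from \cref{prop:ding} since an s.d.d.\ M-tensor is a strong M-tensor by \cref{prop:zhang_0} and $b(P^{(k)})$ is positive by \ref{enu:positive}. Then let $P^{(k+1)}$ be any coordinatewise minimizer of $P\mapsto A(P)(u^{(k)})^{m-1}-b(P)$, which exists row by row using the row-decoupled structure \ref{enu:decoupled}, the compactness of $\mathcal{P}$, and the continuity assumed in \ref{enu:continuous}.

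Next I would establish monotonicity $u^{(k)}\leq u^{(k+1)}$. From the optimality of $P^{(k+1)}$ at $u^{(k)}$ and feasibility of $P^{(k)}$,
\[
A(P^{(k+1)})(u^{(k)})^{m-1}-b(P^{(k+1)})\leq A(P^{(k)})(u^{(k)})^{m-1}-b(P^{(k)})=0,
\]
whence $A(P^{(k+1)})(u^{(k)})^{m-1}\leq b(P^{(k+1)})=A(P^{(k+1)})(u^{(k+1)})^{m-1}$, and applying the nondecreasing map $(A(P^{(k+1)}))_{++}^{-1}$ from \cref{prop:ding} yields $u^{(k)}\leq u^{(k+1)}$.

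The step I expect to be the main obstacle is a uniform a priori bound on $\{u^{(k)}\}$, and it is here that strict (rather than merely weak) diagonal dominance is essential. For any $P\in\mathcal{P}$ and any positive $u$ with $A(P)u^{m-1}=b(P)$, selecting an index $i$ with $u_i=\|u\|_\infty$ and bounding each off-diagonal product $u_{i_2}\cdots u_{i_m}$ by $u_i^{m-1}$ gives
\[
\delta(P)\|u\|_\infty^{m-1}\leq b_i(P)\leq \|b(P)\|_\infty,\qquad \delta(P):=\min_i\Bigl(a_{i\cdots i}(P)-\sum_{(i_2,\ldots,i_m)\neq(i,\ldots,i)}|a_{ii_2\cdots i_m}(P)|\Bigr)>0.
\]
Since $\delta$ and $\|b\|_\infty$ depend continuously on $P$ by \ref{enu:continuous} and $\mathcal{P}$ is compact, $\|b(P)\|_\infty$ is bounded above and $\delta(P)$ is bounded below by a positive constant on $\mathcal{P}$, which uniformly bounds $\|u^{(k)}\|_\infty$.

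With monotonicity and boundedness in hand, $u^{(k)}\uparrow u^*$ for some $u^*\geq u^{(0)}>0$. Extract a subsequence along which $P^{(k_j+1)}\to\tilde{P}\in\mathcal{P}$. Passing to the limit in $A(P^{(k_j+1)})(u^{(k_j+1)})^{m-1}=b(P^{(k_j+1)})$ yields $A(\tilde{P})(u^*)^{m-1}=b(\tilde{P})$, while passing to the limit in the optimality inequality $A(P^{(k_j+1)})(u^{(k_j)})^{m-1}-b(P^{(k_j+1)})\leq A(P)(u^{(k_j)})^{m-1}-b(P)$ (holding for every $P\in\mathcal{P}$) gives $A(\tilde{P})(u^*)^{m-1}-b(\tilde{P})\leq A(P)(u^*)^{m-1}-b(P)$ for every $P\in\mathcal{P}$. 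Combining these two conclusions yields $\min_{P\in\mathcal{P}}\{A(P)(u^*)^{m-1}-b(P)\}=0$, so $u^*$ is the desired positive solution.
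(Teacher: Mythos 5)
Your approach is genuinely different from the paper's. The paper normalizes the diagonals to $1$, rewrites \eqref{eq:bellman} as a fixed point of $F(u)=\max_{P}\{(I-A(P))u^{m-1}+b(P)\}^{[1/(m-1)]}$, shows $F$ is continuous and, using strict diagonal dominance, maps the ball $\{u\in\mathbb{R}_+^n\colon\|u\|_\infty\leq M\}$ into itself, and concludes with Schauder. You instead try to build a monotone policy-iteration sequence and pass to the limit, which, if correct, would be stronger: it would also establish convergence of policy iteration (with a natural pivot rule) under \ref{enu:continuous}, something the paper deliberately does not claim.

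The gap is in the monotonicity step. You conclude $u^{(k)}\leq u^{(k+1)}$ by applying $(A(P^{(k+1)}))_{++}^{-1}$ to $A(P^{(k+1)})(u^{(k)})^{m-1}\leq b(P^{(k+1)})=A(P^{(k+1)})(u^{(k+1)})^{m-1}$, but the map $A_{++}^{-1}$ of \cref{prop:ding} is defined, and known nondecreasing, only on $\mathbb{R}_{++}^n$; nothing guarantees that $A(P^{(k+1)})(u^{(k)})^{m-1}$ is a positive vector. For a strong M-tensor $A$ and a positive $x$, $Ax^{m-1}$ can have negative entries — already in the matrix case, $A=\begin{pmatrix}2&-1\\-1&2\end{pmatrix}$ and $x=(1,10)^{\intercal}$ give $Ax=(-8,19)^{\intercal}$. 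Tellingly, the paper itself remarks, just before \cref{alg:policy_iteration}, that ``Unlike the $m=2$ case, it is not clear if there exists a pivot rule which ensures $u_{(k-1)}\leq u_{(k)}$,'' so you are asserting exactly the property the paper declines to claim. The assertion you need (for a strong M-tensor $A$ with positive $x,y$: $Ax^{m-1}\leq Ay^{m-1}$ and $Ay^{m-1}>0$ imply $x\leq y$) does in fact hold and can be shown by adapting Ding's fixed-point argument (set $\eta=\min_i y_i/x_i<1$, note $\eta^{m-1}Ax^{m-1}<Ay^{m-1}$ coordinatewise even when $Ax^{m-1}$ has nonpositive entries, and use the nondecreasing map $z\mapsto(s^{-1}(Bz^{m-1}+Ay^{m-1}))^{[1/(m-1)]}$ to reach a contradiction), but this is strictly stronger than \cref{prop:ding} and must be proved, not cited; as written the argument is incomplete.
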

A close examination of the proof below reveals that we can relax the
requirement that ``$b(P)$ is positive'' in \ref{enu:positive}
to ``$b(P)$ is nonnegative''. In this case, the arguments establish
the existence of a nonnegative solution $u$.
\begin{proof}
We claim that it is sufficient to consider the case in which $1 - a_{i\cdots i}(P)=0$ for all $i$ (we will come back to this claim later).
Note that $u$ is a solution of \eqref{eq:bellman} if and only if it is a fixed point of the map $F$ defined by
\[
F(u)=\max_{P\in\mathcal{P}}\left\{ (I-A(P))u^{m-1}+b(P)\right\} ^{[1/(m-1)]}.
\]
Since the diagonals of $I-A(P)$ are zero, the off-diagonals of $A(P)$ are nonpositive, and $b(P)$ is positive, it follows that $F$ maps nonnegative vectors to positive vectors (i.e., $F(\mathbb{R}_{+}^{n})\subset\mathbb{R}_{++}^{n}$).
Next,
we prove that $F$ is continuous on $\mathbb{R}_{+}^{n}$. In order
to do so, it is sufficient to show that the function $G$ defined
by $G(u)=(F(u))^{[m-1]}$ is locally Lipschitz on $\mathbb{R}_{+}^{n}$.
Indeed, for nonnegative vectors $u$ and $w$,
\begin{multline*}
\left\Vert G(u)-G(w)\right\Vert _{\infty}\leq\max_{P\in\mathcal{P}}\left\Vert (I-A(P))u^{m-1}-\left(I-A(P)\right)w^{m-1} \right\Vert_{\infty}\\
\leq\max_{P\in\mathcal{P}}\max_{i}\sum_{(i_{2},\ldots,i_{m})\neq(i,\ldots,i)}\left|a_{ii_{2}\cdots i_{m}}(P)\right|\max\left\{ \left\Vert u\right\Vert _{\infty}^{m-2},\left\Vert w\right\Vert _{\infty}^{m-2}\right\} \sum_{j=2}^{m}\left|u_{i_{j}}-w_{i_{j}}\right|\\
\leq\operatorname{const.}\max\left\{ \left\Vert u\right\Vert _{\infty}^{m-2},\left\Vert w\right\Vert _{\infty}^{m-2}\right\} \left\Vert u-w\right\Vert _{\infty}
\end{multline*}
where $\operatorname{const.}$ is a positive constant which does not
depend on $u$ or $w$. Note that in the $m=2$ case, $\Vert u\Vert_{\infty}^{m-2}=\Vert w\Vert_{\infty}^{m-2}=1$,
and hence this argument establishes \emph{global} Lipschitzness. Next,
we derive some bounds on $F$. The triangle inequality yields
\[
\left\Vert F(u)\right\Vert _{\infty}\leq\max_{P\in\mathcal{P}}\left\{ \left\Vert u\right\Vert _{\infty}^{m-1}\max_{i}\sum_{(i_{2},\ldots,i_{m})\neq(i,\ldots,i)}\left|a_{ii_{2}\cdots i_{m}}(P)\right|+\left\Vert b(P)\right\Vert _{\infty}\right\} ^{1/(m-1)}.
\]
Therefore, there exist $i^{*}$ and $P^{*}$ such that
\[
\left\Vert F(u)\right\Vert _{\infty}\leq\left\Vert u\right\Vert _{\infty}\left(\sum_{(i_{2},\ldots,i_{m})\neq(i^{*},\ldots,i^{*})}\left|a_{i^{*}i_{2}\cdots i_{m}}(P^{*})\right|\right)^{1/(m-1)}+\left\Vert b(P^{*})\right\Vert _{\infty}^{1/(m-1)}.
\]
Since $A(P^{*})$ is s.d.d.,
\[
\sum_{(i_{2},\ldots,i_{m})\neq(i^{*},\ldots,i^{*})}\left|a_{i^{*}i_{2}\cdots i_{m}}(P^{*})\right|<a_{i^{*}\cdots i^{*}}(P^{*})=1.
\]
In other words, there exist positive constants $C_{1}<1$ and $C_{2}$
such that for any $u$,
\[
\left\Vert F(u)\right\Vert _{\infty}\leq C_{1}\left\Vert u\right\Vert _{\infty}+C_{2}.
\]
Now, let $M=C_{2}/(1-C_{1})$ and $K=\{u\in\mathbb{R}_{+}^{n}\colon\Vert u\Vert_{\infty}\leq M\}$.
By the above, $F(K)\subset K$. By the Schauder fixed point theorem,
$F$ admits a fixed point in $K$. Moreover, since $F(K)\subset F(\mathbb{R}_{+}^{n})\subset\mathbb{R}_{++}^{n}$,
this fixed point must be positive.

Now, let us return to the unproven claim in the previous paragraph.
Let
\[
D(P)=\operatorname{diag}(a_{1\cdots1}(P),\ldots,a_{m\cdots m}(P))
\]
be the diagonal matrix obtained from the diagonal entries of $A(P)$.
Note, in particular, that the $i$-th entry of the vector $D(P)^{-1}(A(P)u^{m-1})$
is
\[
\sum_{i_{2},\ldots,i_{m}}\frac{a_{ii_{2}\cdots i_{m}}(P)}{a_{i\cdots i}(P)}u_{i_{2}}\cdots u_{i_{m}}.
\]
Therefore, to establish the claim, it is sufficient to show that if
$u$ satisfies
\begin{equation}
\min_{P\in\mathcal{P}}\left\{ D(P)^{-1}\left(A(P)u^{m-1}-b(P)\right)\right\} =0,\label{eq:scaled}
\end{equation}
then $u$ is a solution of \eqref{eq:bellman} (while the converse
is also true, we do not require it).
Indeed, if $u$ satisfies \eqref{eq:scaled}, then $D(P^{*})^{-1}(A(P^{*})u^{m-1}-b(P^{*}))=0$ for some $P^{*}$.
Multiplying both sides of this equation by $D(P^{*})$, we get $A(P^{*})u^{m-1}-b(P^{*})=0$ and hence
\[
\min_{P\in\mathcal{P}}\left\{A(P)u^{m-1}-b(P)\right\}\leq0.
\]
To establish the reverse inequality, we proceed by contradiction,
assuming that we can find $P_{*}$ such that the vector $A(P_{*})u^{m-1}-b(P_{*})$
has a strictly negative entry. In this case, $D(P_{*})^{-1}(A(P_{*})u^{m-1}-b(P_{*}))$
also has a strictly negative entry, contradicting \eqref{eq:scaled}.
Therefore, $u$ is a solution of \eqref{eq:bellman}, as desired.
\end{proof}
We would like to extend the above existence result to w.c.d.d. M-tensors.
In order to do so, we require the following intermediate result, which
is of independent interest.
\begin{lemma}
\label{lem:bounded}Let $A$ be a strong M-tensor and $b$ be a positive
vector (of compatible size). Then, the set
\[
\left\{ (A+\epsilon I)_{++}^{-1}(b)\colon\epsilon\geq0\right\}
\]
is bounded.
\end{lemma}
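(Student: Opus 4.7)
The plan is to show that the family $\{x_\epsilon\}_{\epsilon \geq 0}$ defined by $x_\epsilon = (A+\epsilon I)_{++}^{-1}(b)$ is dominated, in the coordinatewise order, by the single positive vector $\bar{x} = A_{++}^{-1}(b)$, which immediately gives boundedness.

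First I would verify that $x_\epsilon$ is well-defined for every $\epsilon \geq 0$. Since $A$ is a strong M-tensor, we can write $A = sI - B$ with $B$ nonnegative and $s > \rho(B)$. Then $A + \epsilon I = (s + \epsilon)I - B$ with $s + \epsilon > \rho(B)$, so $A + \epsilon I$ is itself a strong M-tensor, and \cref{prop:ding} produces a unique positive $x_\epsilon$ with $(A + \epsilon I) x_\epsilon^{m-1} = b$. In particular $\bar x = x_0$ is a distinguished member of the family.

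The key observation is the following chain. Applying $A + \epsilon I$ to $\bar x$ and using $A \bar x^{m-1} = b$,
\[
(A + \epsilon I) \bar x^{m-1} = A \bar x^{m-1} + \epsilon \bar x^{[m-1]} = b + \epsilon \bar x^{[m-1]} \geq b = (A + \epsilon I) x_\epsilon^{m-1},
\]
where the inequality is coordinatewise and both sides are positive vectors. Now invoke the monotonicity clause of \cref{prop:ding} applied to the strong M-tensor $A + \epsilon I$: since $(A+\epsilon I)_{++}^{-1}$ is nondecreasing on positive right-hand sides, and $\bar x$ is the unique positive preimage of $(A + \epsilon I) \bar x^{m-1}$ while $x_\epsilon$ is the unique positive preimage of $b$, the displayed inequality yields $x_\epsilon \leq \bar x$ for every $\epsilon \geq 0$. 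Since $x_\epsilon$ is also positive, $0 < x_\epsilon \leq \bar x$, so the family is bounded as claimed.

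The main subtlety I anticipate is the correct invocation of the monotonicity statement: one must recognize that both $\bar x$ and $x_\epsilon$ can be expressed as values of the same inverse map $(A+\epsilon I)_{++}^{-1}$ (via $\bar x = (A+\epsilon I)_{++}^{-1}((A+\epsilon I)\bar x^{m-1})$, which holds because $A + \epsilon I$ is a strong M-tensor and positive solutions are unique), and that the right-hand sides being compared are both positive so \cref{prop:ding} applies directly rather than needing the weaker \cref{cor:ding_corollary}. Everything else is a direct computation.
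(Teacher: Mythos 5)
Your proof is correct, and it takes a genuinely different route than the paper. The paper first uses the semi-positivity characterization of strong M-tensors (there is a positive $z$ with $Az^{m-1}>0$, citing \cite{MR3116429}), then invokes the comparison argument embedded in the proof of \cite[Theorem 3.2]{MR3519197} to bound $x_\epsilon$ by $\overline{\gamma}^{1/(m-1)}z$ where $\overline{\gamma}=\max_i b_i/((A+\epsilon I)z^{m-1})_i \leq \max_i b_i/(Az^{m-1})_i$ is bounded independently of $\epsilon$. You instead note that $A+\epsilon I$ is a strong M-tensor directly from the definition, compute $(A+\epsilon I)\bar x^{m-1}=b+\epsilon\bar x^{[m-1]}\geq b$ for $\bar x = A_{++}^{-1}(b)$, and apply the monotonicity of $(A+\epsilon I)_{++}^{-1}$ from \cref{prop:ding} (together with uniqueness of positive preimages, to identify $\bar x$ as $(A+\epsilon I)_{++}^{-1}(b+\epsilon\bar x^{[m-1]})$) to conclude $x_\epsilon\leq\bar x$. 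Your argument is more self-contained — it uses only \cref{prop:ding}, already stated in the paper, rather than reaching into the proof of a cited external theorem — and it produces a particularly natural uniform bound, namely the $\epsilon=0$ member of the family. Both arguments are correct; yours is cleaner within the paper's own ecosystem, while the paper's is a shorter appeal to available machinery in the literature.
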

\begin{proof}
Since $A$ is a strong M-tensor there exists a positive vector $z$ such that $Az^{m-1}$ is positive \cite[Theorem 3]{MR3116429}.
This in turn implies that
\[
(A+\epsilon I)z^{m-1}=Az^{m-1}+\epsilon z^{[m-1]}\geq Az^{m-1}>0.
\]
Now, defining
\[
\overline{\gamma}=\max_{i}\frac{b_{i}}{(\left(A+\epsilon I\right)z^{m-1})_{i}}\leq\max_{i}\frac{b_{i}}{(Az^{m-1})_{i}},
\]
the arguments in the proof of \cite[Theorem 3.2]{MR3519197} imply
that the unique positive solution $x$ of $(A+\epsilon I)x^{m-1}=b$
satisfies
\[
x\leq\overline{\gamma}^{1/(m-1)}z,
\]
giving us an upper bound that is independent of $\epsilon$.
\end{proof}
\begin{lemma}[Existence II]
\label{lem:existence_wcdd}Suppose \ref{enu:positive}, $\mathcal{P}$
is finite, and $A(P)$ is a w.c.d.d. M-tensor for each $P$ in $\mathcal{P}$.
Then, there exists a positive solution $u$ of \eqref{eq:bellman}.
\end{lemma}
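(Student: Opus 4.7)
The plan is to approximate \eqref{eq:bellman} by a sequence of s.d.d.\ problems handled by \cref{lem:existence} and pass to the limit. For each $\epsilon>0$ and $P\in\mathcal{P}$, set $A_{\epsilon}(P)=A(P)+\epsilon I$. Since $A(P)$ is an M-tensor its diagonals are nonnegative, so adding $\epsilon I$ strictly inflates each diagonal while leaving the nonpositive off-diagonals unchanged; thus $A_{\epsilon}(P)$ is an s.d.d.\ Z-tensor with positive diagonals and is therefore a strong M-tensor by \cref{prop:zhang_0}. Because $\mathcal{P}$ is finite, hypothesis \ref{enu:continuous} holds automatically, so \cref{lem:existence} yields a positive vector $u_{\epsilon}$ solving the perturbed equation $\min_{P\in\mathcal{P}}\{A_{\epsilon}(P)u_{\epsilon}^{m-1}-b(P)\}=0$.

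Next I would establish a bound on $\{u_{\epsilon}\}_{\epsilon>0}$ that is uniform in $\epsilon$. At each $\epsilon$ the minimum is attained at some $P_{\epsilon}^{\ast}\in\mathcal{P}$, giving $u_{\epsilon}=(A(P_{\epsilon}^{\ast})+\epsilon I)_{++}^{-1}(b(P_{\epsilon}^{\ast}))$. By \cref{thm:wcdd}, each $A(P)$ is itself a strong M-tensor (being a w.d.d.\ Z-tensor with nonnegative diagonals that is w.c.d.d.), so \cref{lem:bounded} bounds the set $\{(A(P)+\epsilon I)_{++}^{-1}(b(P)):\epsilon\geq0\}$ for every fixed $P$. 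Taking the union of these finitely many bounded sets over $P\in\mathcal{P}$ produces a single uniform bound on $\{u_{\epsilon}\}$. Matching the single-tensor statement of \cref{lem:bounded} to this family indexed by $P$ is precisely where the finiteness of $\mathcal{P}$ is used, and explains why the hypothesis is strengthened from the compactness of \ref{enu:continuous} to outright finiteness in the statement of the lemma.

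With a uniform bound in hand I would extract a subsequence $u_{\epsilon_{k}}\to u\geq0$ as $\epsilon_{k}\downarrow0$ and pass to the limit in the perturbed equation (using that a minimum over a finite index set is a continuous function of its arguments) to conclude that $u$ satisfies \eqref{eq:bellman}. The remaining and most delicate step is strict positivity of $u$. Let $P^{\ast}$ attain the minimum at $u$, so that $A(P^{\ast})u^{m-1}=b(P^{\ast})>0$. Suppose $u_{i}=0$ for some $i$. Then in the expansion
\[
(A(P^{\ast})u^{m-1})_{i}=\sum_{i_{2},\ldots,i_{m}}a_{ii_{2}\cdots i_{m}}(P^{\ast})u_{i_{2}}\cdots u_{i_{m}},
\]
every summand containing an index $i_{k}=i$ carries the factor $u_{i}=0$ and vanishes, while every surviving summand has all $i_{2},\ldots,i_{m}\neq i$ and so is the product of an off-diagonal entry $a_{ii_{2}\cdots i_{m}}(P^{\ast})\leq0$ with nonnegative factors; the total is therefore nonpositive, contradicting $b_{i}(P^{\ast})>0$. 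Hence $u>0$. I expect the positivity verification together with the bookkeeping needed to promote \cref{lem:bounded} to a uniform bound to be the only steps requiring genuine care; the rest is a standard perturb-and-pass-to-the-limit argument.
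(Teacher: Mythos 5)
Your proof is correct and follows essentially the same perturbation strategy as the paper: add $\epsilon I$ to obtain s.d.d.\ problems solvable via \cref{lem:existence}, establish an $\epsilon$-uniform bound, extract a convergent subsequence, and pass to the limit. Two points of comparison are worth noting. First, to obtain the uniform bound you apply \cref{lem:bounded} to each fixed $P\in\mathcal{P}$ separately and then take the finite union, whereas the paper invokes the pigeonhole principle to pass to a subsequence along which a single minimizing policy $P^{*}$ is attained, after which \cref{lem:bounded} applies directly to that one tensor. Both routes use finiteness of $\mathcal{P}$ in an essential way, and your correct observation that this is precisely where \ref{enu:continuous} must be strengthened to finiteness mirrors the paper's own remark. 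Second, your proposal supplies an explicit argument for strict positivity of the limit $u$: if $u_{i}=0$, every term in $(A(P^{*})u^{m-1})_{i}$ that contains the index $i$ vanishes and every surviving term is an off-diagonal (hence nonpositive) entry of the Z-tensor $A(P^{*})$ multiplied by nonnegative factors, forcing $(A(P^{*})u^{m-1})_{i}\leq 0<b_{i}(P^{*})$, a contradiction. The paper's proof stops at showing $u_{(\infty)}$ is a nonnegative solution and does not spell out why it is strictly positive, so your positivity check is a genuine and welcome addition rather than a redundancy.
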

\begin{proof}
As in the proof of \cref{lem:existence}, it is sufficient to
consider the case in which $a_{i\cdots i}(P)=1$. Now, let $k$ be
a positive integer. Since $A(P)$ is w.d.d., it follows that $A(P)+k^{-1}I$
is s.d.d. Therefore, by \cref{lem:existence}, we can find a
positive vector $u_{(k)}$ and a policy $P^{k}$ such that
\[
\min_{P\in\mathcal{P}}\left\{ \left(A(P)+k^{-1}I\right)u_{(k)}^{m-1}-b(P)\right\} =\left(A(P^{k})+k^{-1}I\right)u_{(k)}^{m-1}-b(P^{k})=0.
\]

Since the sequence $(P^{k})_{k}$ has finite range (due to the finitude
of $\mathcal{P}$), the pigeonhole principle affords us the existence
of an increasing sequence $(k_{\ell})_{\ell}$ of positive integers
and a policy $P^{*}$ such that for all $\ell$,
\begin{equation}
\min_{P\in\mathcal{P}}\left\{ \left(A(P)+k_{\ell}^{-1}I\right)u_{(k_{\ell})}^{m-1}-b(P)\right\} =\left(A(P^{*})+k_{\ell}^{-1}I\right)u_{(k_{\ell})}^{m-1}-b(P^{*})=0.\label{eq:subsequence}
\end{equation}
For brevity, let $A=A(P^{*})$ and $b=b(P^{*})$. Since
\[
u_{(k_{\ell})}=(A+k_{\ell}^{-1}I)_{++}^{-1}(b),
\]
\cref{lem:bounded} implies that the sequence $(u_{(k_{\ell})})_{\ell}$
is contained in a compact set and thereby admits a convergent subsequence
with limit $u_{(\infty)}$.

Now, we show that $u_{(\infty)}$ is a solution of \eqref{eq:bellman}.
First, note that $k_{\ell}^{-1}Iu_{(k_{\ell})}^{m-1}\rightarrow0$
as $\ell\rightarrow\infty$. Therefore, it is sufficient to establish
that the function $H$ defined by
\[
H(u)=\min_{P\in\mathcal{P}}\left\{ A(P)u^{m-1}-b(P)\right\}
\]
is continuous on $\mathbb{R}_{+}^{n}$ and take limits in \eqref{eq:subsequence}
to arrive at the desired result. This follows immediately from the
fact that $H(u)=u^{[m-1]}-G(u)$ where $G$ is the locally Lipschitz
(and hence continuous) function defined in the proof of \cref{lem:existence}.
\end{proof}

\begin{rem}
	The proof of \cref{lem:existence_wcdd} uses a pigeonhole principle which relies on the assumed finitude of the policy set $\mathcal{P}$.
	Whether this assumption can be dropped remains an interesting open question.

	From a practical perspective, it is important to note that classical $m=2$ Bellman equations appear almost exclusively with finite policy sets $\mathcal{P}$ in applications.
	For example, in the context of discretizations of differential equations from optimal control, the policy set is always chosen to be a discretization of the corresponding control set so that the problem can be made amenable to numerical computation (see, e.g., \cite[Section 5.2]{MR2551155}).
	Analogously, we do not expect the finiteness assumption to be particularly obstructive in the $m > 2$ case.
	Indeed, the applications studied in \cref{sec:application} involve finite policy sets.
	\label{rem:finite_disclaimer}
\end{rem}

\subsection{Policy iteration}

In the classical $m=2$ setting, a popular computational procedure
to solve \eqref{eq:bellman} is \emph{policy iteration}. We give a
brief sketch of the algorithm and refer to \cite{MR2551155} for details.
At the $k$-th iteration, the algorithm picks a policy $P^{k}$ in
$\mathcal{P}$ and solves the system $A(P^{k})u_{(k)}=b(P^{k})$.
The policy $P^{k}$ is picked to ensure $u_{(k-1)}\leq u_{(k)}$ so that $u=\lim_{k}u_{(k)}$
exists.
Using continuity arguments, it can be shown that this limit is a solution of \eqref{eq:bellman}.

When $\mathcal{P}$ is finite, policy iteration takes at most $|\mathcal{P}|$
iterations before achieving the limit (i.e., $u_{(|\mathcal{P}|)}=u_{(|\mathcal{P}|+1)}=\cdots=u$).
Analogously to the simplex algorithm, whose worst case complexity
is determined by the number of vertices in the feasible polytope,
policy iteration generally terminates in far fewer iterations. Continuing
our analogy, we call the map which associates to each iteration $k$ a policy $P^{k}$ a \emph{pivot rule}.

Below, we present an obvious extension of policy iteration to the case of $m>2$.
In the statement of the algorithm, we allow for some
freedom in the choice of pivot rule.
Unlike the $m=2$ case, it is not clear if there exists a pivot rule which ensures $u_{(k-1)}\leq u_{(k)}$.
The resulting algorithm is below.

\begin{algorithm}[H]
\vspace{6pt}

\begin{algorithmic}[1]
    \Procedure{Policy-Iteration}{$A$, $b$}
        \For{$k \gets 1,\ldots,|\mathcal{P}|$}
			\State Pick $P^k$ in $\mathcal{P} \setminus \{ P^1, \ldots, P^{k-1} \}$ according to some pivot rule \label{line:pivoting} \label{line:policy_improvement}
            \State Solve the tensor equation $A(P^k) u_{(k)}^{m-1} = b(P^k)$ for $u_{(k)}$ in $\mathbb{R}_{++}^n$ \label{line:policy_evaluation}
			\If{$\min_{P \in \mathcal{P}} \{ A(P) u_{(k)}^{m-1} - b(P) \} = 0$} \label{line:equality}
				\State \Return $u_{(k)}$
            \EndIf
        \EndFor
		\State \textbf{error} no solution found
    \EndProcedure
\end{algorithmic}

\vspace{6pt}

\caption{A policy iteration algorithm for \eqref{eq:bellman}\label{alg:policy_iteration}}
\end{algorithm}

\begin{rem}
	The terminating condition on line \ref{line:equality}, while convenient for a theoretical discussion, is unsuitable for a practical implementation.
	Such an implementation should use instead a condition on the relative error between iterates $u_{(k-1)}$ and $u_{(k)}$ (see, e.g., \eqref{eq:relative_error}) or a condition on the norm of $\min_{P \in \mathcal{P}} \{ A(P) u_{(k)}^{m-1} - b(P) \}$.
\end{rem}

\begin{theorem}
\label{thm:bellman}Suppose \ref{enu:positive}, $\mathcal{P}$ is finite, and $A(P)$ is a w.c.d.d. M-tensor for each $P$ in $\mathcal{P}$.
Then, {\sc Policy-Iteration} returns the unique positive solution of \eqref{eq:bellman}.
\end{theorem}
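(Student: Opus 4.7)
The plan is to combine the existence and uniqueness results already established (\cref{lem:existence_wcdd} and \cref{lem:uniqueness}) with a finite-termination argument based on the fact that the pivot rule enumerates $\mathcal{P}$ without repetition.

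First I would check that the algorithm is well-defined. By \cref{thm:wcdd}, a w.c.d.d. M-tensor is a strong M-tensor, so each $A(P^k)$ is a strong M-tensor. Combined with \ref{enu:positive}, \cref{prop:ding} guarantees that the tensor equation $A(P^k) x^{m-1} = b(P^k)$ on line~\ref{line:policy_evaluation} admits a unique positive solution $u_{(k)}$, so line~\ref{line:policy_evaluation} is well-posed.

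Next, \cref{lem:existence_wcdd} and \cref{lem:uniqueness} (whose continuity hypothesis~\ref{enu:continuous} is trivially satisfied since $\mathcal{P}$ is finite) produce a unique positive solution $u^{*}$ of \eqref{eq:bellman}. Because $\mathcal{P}$ is finite, the infimum in \eqref{eq:bellman} is attained at some policy $P^{*}$ in $\mathcal{P}$, so that $A(P^{*})(u^{*})^{m-1} = b(P^{*})$.

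Finally I would address correctness and termination together. For correctness, if the terminating condition on line~\ref{line:equality} ever fires at some iteration $k$, then $u_{(k)}$ is a positive solution of \eqref{eq:bellman}, and uniqueness forces $u_{(k)} = u^{*}$. For termination, since the pivot rule on line~\ref{line:pivoting} selects a fresh policy each iteration and $\mathcal{P}$ is finite, the policy $P^{*}$ is picked within the first $|\mathcal{P}|$ iterations, say at iteration $k^{*}$. At that iteration, the unique positive solution of $A(P^{*}) x^{m-1} = b(P^{*})$ is simultaneously $u_{(k^{*})}$ (by \cref{prop:ding} applied on line~\ref{line:policy_evaluation}) and $u^{*}$, hence $u_{(k^{*})} = u^{*}$; since $u^{*}$ satisfies the minimality condition of \eqref{eq:bellman}, the terminating condition fires at iteration $k^{*}$ (if not earlier) and the algorithm returns $u^{*}$.

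The main obstacle is essentially bookkeeping rather than substantive difficulty: the substantive work has all been done in \cref{thm:wcdd}, \cref{prop:ding}, \cref{lem:existence_wcdd}, and \cref{lem:uniqueness}. The only subtle point is recognizing that one must pass from ``w.c.d.d. M-tensor'' to ``strong M-tensor'' (via \cref{thm:wcdd}) in order to invoke \cref{prop:ding} both for the well-posedness of line~\ref{line:policy_evaluation} and for the identification $u_{(k^{*})} = u^{*}$ at the iteration where $P^{*}$ is selected.
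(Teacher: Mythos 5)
Your proposal is correct and follows essentially the same route as the paper, which simply cites \cref{lem:uniqueness}, \cref{lem:existence_wcdd}, and the fact that the algorithm exhausts $\mathcal{P}$. You have merely filled in the details the paper leaves implicit, most usefully the observation that one must pass from ``w.c.d.d.\ M-tensor'' to ``strong M-tensor'' via \cref{thm:wcdd} so that \cref{prop:ding} applies on line~\ref{line:policy_evaluation} and so that the hypotheses of \cref{lem:uniqueness} are met.
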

As usual, we can relax the requirement that ``$b(P)$ is positive'' in \ref{enu:positive} to ``$b(P)$ is nonnegative'' by replacing $\mathbb{R}_{++}^n$ with $\mathbb{R}_+^n$ on line \ref{line:policy_evaluation} of the algorithm.
In this case, the algorithm returns a (possibly nonunique) nonnegative solution $u$.
\begin{proof}
This is a direct consequence of \cref{lem:uniqueness} and \cref{lem:existence_wcdd} along with the fact that the algorithm iterates over all policies.
\end{proof}

Taking $w = u_{(k)}$ and $P^\prime = P^k$ in the result below establishes that the solution $u$ described in \cref{thm:bellman} dominates the iterates $u_{(k)}$ generated by the algorithm.

\begin{lemma}
Suppose \ref{enu:positive}, \ref{enu:continuous}, and $A(P)$ is a strong M-tensor for each $P$ in $\mathcal{P}$.
If $u$ is a positive solution of \eqref{eq:bellman} and $w$ is a positive vector satisfying $A(P^{\prime})w^{m-1}=b(P^{\prime})$ for some $P^\prime$ in $\mathcal{P}$, then $u\geq w$.
\end{lemma}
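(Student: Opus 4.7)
The plan is to mimic the uniqueness argument in \cref{lem:uniqueness} but applied with the fixed policy $P^\prime$ rather than an optimizer. First I would use the fact that $u$ is a positive solution of \eqref{eq:bellman} to deduce that for every policy $P \in \mathcal{P}$, and in particular for the given $P^\prime$,
\[
A(P^\prime) u^{m-1} - b(P^\prime) \geq \min_{P \in \mathcal{P}} \left\{ A(P) u^{m-1} - b(P) \right\} = 0,
\]
so that $A(P^\prime) u^{m-1} \geq b(P^\prime)$.

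Next I would observe that both sides are positive vectors. The right hand side is $b(P^\prime) > 0$ by \ref{enu:positive}, and the left hand side is bounded below by $b(P^\prime)$ and hence also positive. Since $A(P^\prime)$ is a strong M-tensor, \cref{prop:ding} provides the nondecreasing inverse map $(A(P^\prime))_{++}^{-1} : \mathbb{R}_{++}^n \to \mathbb{R}_{++}^n$, and by the uniqueness clause of that proposition we have $u = (A(P^\prime))_{++}^{-1}(A(P^\prime) u^{m-1})$ and $w = (A(P^\prime))_{++}^{-1}(b(P^\prime))$.

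Applying monotonicity of $(A(P^\prime))_{++}^{-1}$ to the coordinatewise inequality $A(P^\prime) u^{m-1} \geq b(P^\prime)$ then yields
\[
u = (A(P^\prime))_{++}^{-1}(A(P^\prime) u^{m-1}) \geq (A(P^\prime))_{++}^{-1}(b(P^\prime)) = w,
\]
which is the desired conclusion. There is no genuine obstacle here: the entire argument is a three-line consequence of \cref{prop:ding} once one notices that the Bellman inequality, specialized to the specific policy $P^\prime$, turns the problem into a direct comparison of two solutions of the \emph{same} tensor equation with comparable positive right hand sides.
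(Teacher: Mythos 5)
Your proposal is correct and takes essentially the same approach as the paper: both specialize the Bellman minimum to the fixed policy $P^\prime$, derive $A(P^\prime)u^{m-1}\geq b(P^\prime)=A(P^\prime)w^{m-1}>0$, and then apply the monotone map $(A(P^\prime))_{++}^{-1}$ from \cref{prop:ding} to conclude $u\geq w$.
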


\begin{proof}
Since
\[
A(P^{\prime})u^{m-1}-b(P^{\prime})\geq\min_{P\in\mathcal{P}}\left\{ A(P)u^{m-1}-b(P)\right\} =0=A(P^{\prime})w^{m-1}-b(P^{\prime}),
\]
it follows that $A(P^{\prime})u^{m-1}\geq A(P^{\prime})w^{m-1}>0$.
The desired result follows by applying the function $(A(P^{\prime}))_{++}^{-1}$ to this inequality.
\end{proof}

\subsection{Locally optimal policy}

The pivot rule which we employ in the numerical tests appearing in
the sequel is inspired by the classical ($m=2$) policy iteration
algorithm. The idea behind the pivot rule is simple: at the $k$-th
iteration, let
\[
\mathcal{O}=\underset{P\in\mathcal{P}}{\operatorname{arg\,min}}\left\{ A(P)u_{(k-1)}^{m-1}-b(P)\right\}
\]
be the set of policies that are ``locally optimal'' for $u_{(k-1)}$
where, for convenience, we define $u_{(0)}=0$. Let
\[
\mathcal{H}=\mathcal{P}\setminus\{P^{1},\ldots,P^{k-1}\}
\]
be the set of policies the algorithm has not yet considered. If $\mathcal{O}\cap\mathcal{H}\neq\emptyset$,
we pick $P^{k}$ in $\mathcal{O}\cap\mathcal{H}$. Otherwise, we pick
$P^{k}$ in $\mathcal{H}$. It is readily verified that if $m=2$,
we retrieve the classical policy iteration algorithm \cite[Algorithm Ho-1]{MR2551155}.

\subsection{Incorporating lower order tensors}

The results of the previous sections can also be applied to the following
more general higher order Bellman equation
\begin{equation}
\min_{P\in\mathcal{P}}\left\{ A(P)u^{m-1}-B_{m-1}(P)u^{m-2}-B_{m-2}(P)u^{m-3}-\cdots-B_{2}(P)u-b(P)\right\} =0\label{eq:general}
\end{equation}
where each $B_{p}(P)$ is a row-decoupled (see \ref{enu:decoupled}
at the beginning of \cref{sec:bellman}) $p$-order $n$-dimensional
nonnegative tensor. This is possible since if $A(P)$ is an $m$-order
$n$-dimensional strong M-tensor for each $P$, then $u$ is a solution
of \eqref{eq:general} if and only if $w=\begin{pmatrix}u & 1\end{pmatrix}^{\intercal}$
is a solution of
\[
\min_{P\in\mathcal{P}}\left\{ A^{\prime}(P)w^{m-1}-\begin{pmatrix}b(P)\\
1
\end{pmatrix}\right\} =0
\]
where for each $P$, $A^{\prime}(P)$ is an appropriately chosen $m$-order
($n+1$)-dimensional strong M-tensor whose construction is detailed
in the proof of the next lemma.
\begin{lemma}
\label{lem:transform}Let $A=(a_{i_{1}\cdots i_{m}})$ be an $m$-order
$n$-dimensional strong M-tensor and $B_{p}=(b_{i_{1}\cdots i_{p}})$
be a $p$-order $n$-dimensional nonnegative tensor for $2\leq p<m$.
Then, there exists an $m$-order ($n+1$)-dimensional strong M-tensor
$A^{\prime}$ such that
\begin{equation}
\begin{pmatrix}Ax^{m-1}-B_{m-1}x^{m-2}-B_{m-2}x^{m-3}-\cdots-B_{2}x\\
1
\end{pmatrix}=A^{\prime}\begin{pmatrix}x\\
1
\end{pmatrix}^{m-1}\text{ for all }x\in\mathbb{R}^{n}.\label{eq:equality}
\end{equation}
\end{lemma}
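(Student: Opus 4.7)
My plan is to build $A'$ block-wise by encoding each lower-order tensor $B_p$ into the entries of $A'$ whose index tuple has exactly $m-p$ trailing coordinates equal to $n+1$, then read off \eqref{eq:equality} by expansion, and finally lift the strong M-tensor property from $A$ to $A'$ via a positive-test-vector criterion.

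Concretely, I would define $A' = (a'_{i_1 \cdots i_m})$ by setting $a'_{i i_2 \cdots i_m} = a_{i i_2 \cdots i_m}$ when $1 \leq i \leq n$ and no $i_s$ equals $n+1$; setting $a'_{i i_2 \cdots i_{m-k}(n+1)\cdots(n+1)} = -(B_{m-k})_{i i_2 \cdots i_{m-k}}$ when $1 \leq i \leq n$ and the last $k$ coordinates are $n+1$, for $k = 1, \ldots, m-2$; setting $a'_{(n+1)\cdots(n+1)} = 1$; and leaving all other entries zero. Substituting $y = (x^\intercal,\, 1)^\intercal$ and splitting $(A'y^{m-1})_i$ according to the number $k$ of trailing $(n+1)$-indices, only the patterns I populated contribute, and reindexing $p = m-k$ recovers the $i \leq n$ rows of \eqref{eq:equality}; the $(n+1)$-st row collapses to $1 \cdot 1^{m-1} = 1$.

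It remains to verify that $A'$ is a strong M-tensor. The Z-tensor property is immediate: every off-diagonal entry of $A'$ is either an off-diagonal entry of the Z-tensor $A$, a negative of a nonnegative $B_p$-entry, or zero. For the strong M property, I would invoke the characterization already used in the proof of \cref{lem:bounded} via \cite[Theorem 3]{MR3116429}: a Z-tensor is a strong M-tensor if and only if some positive vector maps to a positive vector under its tensor action. Taking $z > 0$ with $Az^{m-1} > 0$ (which exists since $A$ is strong M) and testing $y = (z^\intercal,\, c)^\intercal$ for $c > 0$ to be chosen, the expansion from the previous paragraph yields
\[
(A'y^{m-1})_i = (Az^{m-1})_i - \sum_{p=2}^{m-1} (B_p z^{p-1})_i\, c^{m-p} \ \ (i \leq n), \qquad (A'y^{m-1})_{n+1} = c^{m-1}.
\]
The correction is $O(c)$ as $c \to 0^+$ while $(Az^{m-1})_i > 0$ is fixed, so for $c$ sufficiently small $A'y^{m-1}$ is strictly positive.

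The main obstacle I anticipate is bookkeeping --- ensuring the trailing-$(n+1)$ convention is applied consistently in both the identity check and the strong M-tensor test --- rather than anything deep. The substantive idea is that the extra coordinate $c$ can be chosen independently of $z$, so any Perron-type witness for $A$ lifts to one for $A'$ after sufficiently diluting the contributions of the $B_p$.
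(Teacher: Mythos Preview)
Your proof is correct, and the overall architecture---construct a Z-tensor $A'$ satisfying \eqref{eq:equality}, then exhibit a positive test vector---matches the paper's. There are two differences worth noting. First, the paper symmetrises the placement of each $B_p$ across all positions of the extra index (e.g.\ for $m=3$ it sets $a'_{i,j,n+1}=a'_{i,n+1,j}=-b_{i,j}/2$), whereas you load everything onto the trailing block; both choices produce Z-tensors satisfying the identity, so this is cosmetic. Second, and more substantively, for the strong M-tensor step the paper invokes \cite[Theorem 3.6]{MR3519197} to produce a positive $v$ with $Av^{m-1}-\sum_p B_p v^{p-1}>0$ directly, and then tests $(v,1)$; you instead start from any $z>0$ with $Az^{m-1}>0$ and shrink the appended coordinate to $(z,c)$ with $c$ small. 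Your route is more elementary and self-contained---it needs only the semi-positivity characterisation, not the additional cited result---while the paper's route has the minor advantage of landing exactly on the canonical lift $(v,1)$ used in the statement.
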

\begin{proof}
We claim that we can construct an $m$-order ($n+1$)-dimensional
Z-tensor $A^{\prime}=(a_{i_{1}\cdots i_{m}}^{\prime})$ satisfying
\eqref{eq:equality}. Indeed, if this is the case, since $A$ is a
strong M-tensor, we can find a positive vector $v$ such that
\[
Av^{m-1}-B_{m-1}v^{m-2}-B_{m-2}v^{m-3}-\cdots-B_{2}v>0
\]
(see the proof of \cite[Theorem 3.6]{MR3519197} for details) and
hence
\[
A^{\prime}\begin{pmatrix}v\\
1
\end{pmatrix}^{m-1}>0.
\]
Therefore, $A^{\prime}$ is semi-positive and hence a strong M-tensor
\cite[Theorem 3]{MR3116429}.

Returning to the claim above, we give the construction in the case
of $m=3$, from which the general $m\geq3$ case should be evident.
Indeed, in the case of $m=3$, we can take the nonzero entries of
$A^{\prime}$ to be
\begin{align*}
a_{i,j,k}^{\prime} & =a_{i,j,k}, & \text{if }1\leq i,j,k\leq n\\
a_{i,j,n+1}^{\prime}=a_{i,n+1,j}^{\prime} & =-b_{i,j}/2, & \text{if }1\leq i,j\leq n\\
a_{n+1,n+1,n+1}^{\prime} & =1.
\end{align*}
Clearly, $A^{\prime}$ is a Z-tensor. Now, let $x$ in $\mathbb{R}^{n}$
be arbitrary and $y=\begin{pmatrix}x & 1\end{pmatrix}^{\intercal}$.
Then, for $1\leq i\leq n$,
\begin{multline*}
(Ax^{2}-Bx)_{i}=\sum_{1\leq j,k\leq n}a_{i,j,k}x_{j}x_{k}-\sum_{1\leq j\leq n}b_{i,j}x_{j}\\
=\sum_{1\leq j,k\leq n}a_{i,j,k}^{\prime}\left(x_{j}x_{k}\right)+\sum_{1\leq j\leq n}a_{i,j,n+1}^{\prime}\left(x_{j}\cdot1\right)+\sum_{1\leq j\leq n}a_{i,n+1,j}^{\prime}\left(1\cdot x_{j}\right)\\
=\sum_{1\leq j,k\leq n+1}a_{i,j,k}^{\prime}y_{j}y_{k}=(A^{\prime}y^{2})_{i}
\end{multline*}
so that \eqref{eq:equality} is satisfied.
\end{proof}

\section{\label{sec:application}Application to optimal stochastic control}

In this section, we apply our results to solve numerically the differential
equation
\begin{equation}
\begin{cases}
-{\displaystyle \max_{(\gamma,\lambda)\in(\Gamma,\Lambda)}}\left\{ L^{\lambda}U-\eta U-\frac{1}{2}\alpha\gamma^{2}U+\beta\gamma\right\} =0, & \text{on }\Omega\\
U=g, & \text{on }\partial\Omega
\end{cases}\label{eq:pde}
\end{equation}
where $L^{\lambda}$ is the (possibly degenerate) elliptic operator
\[
L^{\lambda}U(x)=\frac{1}{2}\sigma(x,\lambda)^{2}U^{\prime\prime}(x)+\mu(x,\lambda)U^{\prime}(x).
\]
and
\[
\Omega=(0,1)\text{,}\qquad\Gamma=[0,\infty)\text{,}\qquad\text{and}\qquad\Lambda\text{ is a compact metric space}.
\]
We require the following assumptions:
\begin{enumerate}[label=(A\arabic*)]
\item \label{enu:sets}Letting $d_{H}$ denote the Hausdorff metric, to each $\Delta x>0$, we can associate a finite
subset $\Lambda_{\Delta x}$ of $\Lambda$ such that $d_{H}(\Lambda_{\Delta x},\Lambda)\rightarrow0$
as $\Delta x\downarrow0$.
\item \label{enu:functions}$\sigma,\mu$ (resp. $\eta,\alpha,\beta,g$)
are real (resp. positive) maps with $\operatorname{dom}(\sigma)=\operatorname{dom}(\mu)=\overline{\Omega}\times\Lambda$
(resp. $\operatorname{dom}(\eta)=\operatorname{dom}(\alpha)=\operatorname{dom}(\beta)=\operatorname{dom}(g)=\overline{\Omega}$).
\item \label{enu:bounds}$\sup_{x}\beta(x)^{2}/(\alpha(x)\eta(x))<\infty$.
\end{enumerate}
Note that \ref{enu:sets} simply says that we can approximate $\Lambda$
by finite subsets.

Now, there are two ways to discretize \eqref{eq:pde}: a ``discretize
then optimize'' (DO) approach and an ``optimize then discretize''
(OD) approach. In the DO approach, we first replace the unbounded
control set $\Gamma$ by a partition of the interval $\Gamma_{0}=[0,\gamma_{\max}]$
(for some $\gamma_{\max}>0$ chosen large enough). Next, we replace
the various quantities $U_{xx}$, $U_{x}$, and $U$ by their discrete
approximations. The resulting system is a classical $m=2$ Bellman
equation, which can be solved by policy iteration. Since the DO approach
is well-understood, we present its derivation in \cref{app:do}.

In the OD approach, we first find the point $\gamma$ at which the
maximum 
\begin{equation}
\max_{\gamma\in\Gamma}\left\{ -\frac{1}{2}\alpha\gamma^{2}U+\beta\gamma\right\} \label{eq:optimization}
\end{equation}
is attained. Substituting this back into \eqref{eq:pde}, we discretize
the resulting differential equation. The OD approach results in a
scheme with lower truncation error.

In general, applying an OD approach to an elliptic differential equation
may result in a scheme which is nonmonotone and/or hard to solve (see
the discussion in \cite{forsyth2007numerical}). This is problematic,
since it is well-known that nonmonotone schemes are not guaranteed
to converge \cite{MR2218974}. In our case, the resulting OD system
ends up being a higher order Bellman equation involving a w.c.d.d.
M-tensor, making it both monotone and easy to solve by policy iteration
(recall \cref{thm:bellman}).
\begin{rem}[Connection to optimal stochastic control]
Let $W=(W_{t})_{t\geq0}$ be a standard Brownian motion on a filtered
probability space satisfying the usual conditions. It is well-known
that (under some mild conditions), the value function
\begin{multline*}
v(x)=\sup_{\gamma,\lambda}\mathbb{E}\biggl[\int_{0}^{\tau}\exp\left(\int_{0}^{t}-\eta(X_{s}^{\lambda})-\frac{1}{2}\alpha(X_{s}^{\lambda})\gamma_{s}^{2}ds\right)\beta(X_{t}^{\lambda})\gamma_{t}dt\\
+\exp\left(\int_{0}^{\tau}-\eta(X_{s}^{\lambda})-\frac{1}{2}\alpha(X_{s}^{\lambda})\gamma_{s}^{2}ds\right)g(X_{\tau}^{\lambda})\biggr]
\end{multline*}
is a viscosity solution of \eqref{eq:pde} \cite{MR2976505}. In the
above, the supremum is over all progressively measurable processes
$\gamma=(\gamma_{t})_{t\geq0}$ and $\lambda=(\lambda_{t})_{t\geq0}$
taking values in $\Gamma$ and $\Lambda$, respectively, and
\[
X_{t}^{\lambda}=x+\int_{0}^{t}\mu(X_{s},\lambda_{s})ds+\int_{0}^{t}\sigma(X_{s},\lambda_{s})dW_{s}\qquad\text{and}\qquad\tau=\inf\{t\geq0\colon X_{t}^{\lambda}\notin\Omega\}.
\]
To ensure that the process $X^{\lambda}$ is well-defined, one should
impose some additional assumptions (e.g., $\sigma(\cdot,\lambda)$
and $\mu(\cdot,\lambda)$ are Lipschitz uniformly in $\lambda$).
\end{rem}

\subsection{\label{subsec:od}Optimize then discretize scheme}

In this subsection, we derive the OD scheme and prove that it converges to the solution of \eqref{eq:pde}.
Let $\Delta x=1/M$ for some positive integer $M$. We write $u_{i}\approx U(i\Delta x)$
for the numerical solution at $i\Delta x$ and let $u=(u_{0},\ldots,u_{M})$.
We denote by
\[
(L_{\Delta x}^{\lambda}u)_{i}=\frac{1}{2}\sigma_{i}(\lambda)^{2}\frac{u_{i-1}-2u_{i}+u_{i+1}}{(\Delta x)^{2}}+\mu_{i}(\lambda)\frac{1}{\Delta x}\begin{cases}
u_{i+1}-u_{i}, & \text{if }\mu_{i}(\lambda)\geq0\\
u_{i}-u_{i-1}, & \text{if }\mu_{i}(\lambda)<0
\end{cases}
\]
a standard upwind discretization of $L^{\lambda}U(i\Delta x)$ where,
for brevity, we have defined $\sigma_{i}(\lambda)=\sigma(i\Delta x,\lambda)$
and $\mu_{i}(\lambda)=\mu(i\Delta x,\lambda)$.

First, note that a solution $U$ of \eqref{eq:pde} must be everywhere
positive since otherwise \eqref{eq:optimization} is unbounded. By
virtue of this, the maximum in \eqref{eq:optimization} is
\[
\max_{\gamma}\left\{ -\frac{1}{2}\alpha\gamma^{2}U+\beta\gamma\right\} ={\displaystyle \frac{1}{2}\frac{\beta^{2}}{\alpha}\frac{1}{U}}.
\]
This suggests approximating \eqref{eq:pde} by the $M+1$ ``discrete''
equations
\begin{equation}
\begin{cases}
-{\displaystyle \max_{\lambda\in\Lambda_{\Delta x}}}\left\{ (L_{\Delta x}^{\lambda}u)_{i}-\eta_{i}u_{i}+{\displaystyle \frac{1}{2}\frac{\beta_{i}^{2}}{\alpha_{i}}\frac{1}{u_{i}}}\right\} =0, & \text{for }0<i<M\\
u_{i}=g_{i}, & \text{for }i=0,M
\end{cases}\label{eq:od}
\end{equation}
where we have defined $\alpha_{i}=\alpha(i\Delta x)$, $\beta_{i}=\beta(i\Delta x)$,
$\eta_{i}=\eta(i\Delta x)$, and $g_{i}=g(i\Delta x)$.

The difficulty in the OD approach is that \eqref{eq:od} cannot be
written as a classical $m=2$ Bellman equation due to the term $1/u_{i}$.
We resolve this by writing \eqref{eq:od} as a Bellman equation of
order $m=3$ instead. In order to do so, we first note that a positive
vector $u=(u_{0},\ldots,u_{M})$ satisfies \eqref{eq:od} if and only
if it satisfies
\begin{equation}
\begin{cases}
-{\displaystyle \max_{\lambda\in\Lambda_{\Delta x}}}\left\{ u_{i}(L_{\Delta x}^{\lambda}u)_{i}-\eta_{i}u_{i}^{2}+{\displaystyle \frac{1}{2}\frac{\beta_{i}^{2}}{\alpha_{i}}}\right\} =0, & \text{for }0<i<M\\
u_{i}^{2}=g_{i}^{2}, & \text{for }i=0,M.
\end{cases}\label{eq:discrete_equations}
\end{equation}
To see this, multiply each equation in \eqref{eq:od} by $u_{i}$
(conversely, divide each equation in \eqref{eq:discrete_equations}
by $u_{i}$).
Next, define the Cartesian product
$\boldsymbol{\Lambda}_{\Delta x} = (\Lambda_{\Delta x})^{M+1}$ and denote by $\boldsymbol{\lambda}=(\lambda_{0},\ldots,\lambda_{M})$
an element of $\boldsymbol{\Lambda}_{\Delta x}$ with $\lambda_{i}\in\Lambda_{\Delta x}$.
Define $A(\boldsymbol{\lambda})=(a_{ijk}(\boldsymbol{\lambda}))$
as the order $m=3$ tensor whose only nonzero entries are
\stepcounter{equation}\begin{align}
a_{i,i-1,i}(\boldsymbol{\lambda})
& =a_{i,i,i-1}(\boldsymbol{\lambda}),
\nonumber \\
2a_{i,i,i-1}(\boldsymbol{\lambda})
& =-\frac{1}{2}\sigma_{i}(\lambda_{i})^{2}\frac{1}{(\Delta x)^{2}}+\phantom{|}\mu_{i}(\lambda_{i})\phantom{|}\frac{1}{\Delta x}\boldsymbol{1}_{(-\infty,0)}(\mu_{i}(\lambda_{i})), 
\nonumber \\
a_{i,i,i}(\boldsymbol{\lambda})
& =+\frac{1}{2}\sigma_{i}(\lambda_{i})^{2}\frac{2}{(\Delta x)^{2}}+|\mu_{i}(\lambda_{i})|\frac{1}{\Delta x}+\eta_{i},
& \text{if }0<i<M \tag{\theequation a}
\label{eq:tridiagonal_tensor} \\
2a_{i,i,i+1}(\boldsymbol{\lambda})
& =-\frac{1}{2}\sigma_{i}(\lambda_{i})^{2}\frac{1}{(\Delta x)^{2}}-\phantom{|}\mu_{i}(\lambda_{i})\phantom{|}\frac{1}{\Delta x}\boldsymbol{1}_{(0,+\infty)}(\mu_{i}(\lambda_{i})), 
\nonumber \\
a_{i,i+1,i}(\boldsymbol{\lambda})
& =a_{i,i,i+1}(\boldsymbol{\lambda}),
\nonumber
\end{align}
and
\begin{equation}
a_{i,i,i}(\boldsymbol{\lambda}) = 1, \qquad \text{if }i=0,M.
\tag{\theequation b}
\label{eq:tridiagonal_tensor2}
\end{equation}
Lastly, define the vector $b=(b_{0},\ldots,b_{M})$ by
\begin{equation}
b_{i}=\begin{cases}
{\displaystyle \frac{1}{2}\frac{\beta_{i}^{2}}{\alpha_{i}}}, & \text{if }0<i<M\\
g_{i}^{2}, & \text{if }i=0,M.
\end{cases}\label{eq:od_vector}
\end{equation}
Then, \eqref{eq:discrete_equations} is equivalent to the order $m=3$
Bellman equation
\begin{equation}
\min_{\boldsymbol{\lambda}\in\boldsymbol{\Lambda}_{\Delta x}}A(\boldsymbol{\lambda})u^{2}=b.\label{eq:bellman_3}
\end{equation}

We would like to apply policy iteration to compute a positive solution
of \eqref{eq:bellman_3}. According to \cref{thm:bellman}, this
requires $A(\boldsymbol{\lambda})$ to be an s.d.d. strong M-tensor.
We establish this by showing that $A(\boldsymbol{\lambda})$ is an
s.d.d. Z-tensor with positive diagonals and applying \cref{prop:zhang_0}.
Indeed, that $A(\boldsymbol{\lambda})$ is a Z-tensor with positive
diagonals is clear from its definition. Next, note that by \eqref{eq:tridiagonal_tensor} and \eqref{eq:tridiagonal_tensor2},
\begin{equation}
a_{i\cdots i}(\boldsymbol{\lambda})+\sum_{(i_{2},\ldots,i_{m})\neq(i,\ldots,i)}a_{ii_{2}\cdots i_{m}}(\boldsymbol{\lambda})=\begin{cases}
\eta_{i}, & \text{if }0<i<M\\
1, & \text{if }i=0,M.
\end{cases}\label{eq:sum}
\end{equation}
Since $\eta_{i}>0$ by \ref{enu:functions}, the above implies that
$A(\boldsymbol{\lambda})$ is s.d.d.

While the above establishes that the numerical solution is well-defined
for each $\Delta x>0$, we have yet to relate its limit as $\Delta x\downarrow0$
back to the differential equation \eqref{eq:pde}. In order to do
so, we use the framework of viscosity solutions \cite{MR1118699}:
\begin{theorem}
Suppose \ref{enu:sets}, \ref{enu:functions}, \ref{enu:bounds},
and that the differential equation \eqref{eq:pde} satisfies a strong
comparison principle in the sense of Barles and Souganidis \cite{MR1115933}.
For each $\Delta x>0$, define the function $u_{\Delta x}:\overline{\Omega}\rightarrow\mathbb{R}$
by
\[
u_{\Delta x}(x)=\sum_{i=0}^{M}u_{i}\boldsymbol{1}_{[x_{i}-\Delta x/2,x_{i}+\Delta x/2)}(x)
\]
where $u=(u_{0},\ldots,u_{M})$ is the unique positive solution of
\eqref{eq:bellman_3}. Then,
\begin{equation}
u_{\Delta x}\leq\sup_{x}\max\left(\sqrt{\frac{1}{2}\frac{\beta^{2}(x)}{\alpha(x)\eta(x)}},g(x)\right)<\infty\label{eq:stable}
\end{equation}
and, as $\Delta x\downarrow0$, $u_{\Delta x}$ converges uniformly
to the viscosity solution $U$ of \eqref{eq:pde}.
\end{theorem}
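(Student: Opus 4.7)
The plan is to apply the Barles-Souganidis framework: given the assumed strong comparison principle, uniform convergence of $u_{\Delta x}$ to $U$ reduces to verifying that the scheme \eqref{eq:od} is $\ell^\infty$-stable, monotone, and consistent with \eqref{eq:pde}.

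First I would prove the stability bound \eqref{eq:stable}. Positivity of $u$ is already provided by \cref{thm:bellman} applied to \eqref{eq:bellman_3}. Let $i^*$ be a maximizer of $i\mapsto u_i$. If $i^*\in\{0,M\}$ then $u_{i^*}=g_{i^*}$ directly. Otherwise $(L_{\Delta x}^{\lambda}u)_{i^*}\le 0$ at any interior discrete maximum for every $\lambda$, and extracting a maximizer $\lambda^*$ in \eqref{eq:od} and dropping the nonpositive $(L^{\lambda^*}_{\Delta x}u)_{i^*}$ term yields $\eta_{i^*}u_{i^*}^2\le\tfrac{1}{2}\beta_{i^*}^2/\alpha_{i^*}$, which together with \ref{enu:bounds} gives the claimed $\Delta x$-independent upper bound. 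Monotonicity in the Barles-Souganidis sense is immediate from the fact that the off-diagonals of $L^\lambda_{\Delta x}$ are nonnegative: raising any $u_j$ with $j\ne i$ while holding $u_i$ fixed increases the bracket inside $\max$ in \eqref{eq:od} and hence lowers the scheme operator. Consistency follows from a Taylor expansion giving $(L^\lambda_{\Delta x}\varphi)(i\Delta x)=L^\lambda\varphi(x)+O(\Delta x)$ uniformly in $\lambda$ for any smooth positive $\varphi$, combined with the Hausdorff convergence $d_H(\Lambda_{\Delta x},\Lambda)\to 0$ from \ref{enu:sets} and the continuity in $\lambda$ from \ref{enu:functions}.

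With stability, monotonicity, and consistency in hand, the standard half-relaxed limits procedure produces $\overline{u}=\limsup^* u_{\Delta x}$ and $\underline{u}=\liminf_* u_{\Delta x}$ that are, respectively, an upper semicontinuous viscosity subsolution and a lower semicontinuous viscosity supersolution of \eqref{eq:pde}, with the boundary data $g$ inherited via \eqref{eq:od_vector}. The strong comparison principle then forces $\overline{u}\le\underline{u}$ on $\overline\Omega$; since $\underline{u}\le\overline{u}$ always, equality follows, which is equivalent to uniform convergence $u_{\Delta x}\to U$. The main technical obstacle is the reciprocal $1/u_i$ appearing in \eqref{eq:od}: at the level of half-relaxed limits it is a priori conceivable that $\underline{u}$ touches zero, making the consistency equation singular. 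The clean way around this is to verify consistency against the equivalent polynomial formulation \eqref{eq:discrete_equations} — whose symbol is polynomial in $u$ and hence non-singular — and to recover consistency with the original PDE operator at interior points only after using that the viscosity solution $U$ is strictly positive, which follows from the fact that the supremum in \eqref{eq:optimization} would otherwise be unbounded above.
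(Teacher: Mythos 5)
Your proposal takes essentially the same approach as the paper: the stability bound \eqref{eq:stable} is proved via the discrete maximum principle argument (split interior vs.\ boundary maximizer, use $(L_{\Delta x}^{\lambda}u)_{i^*}\le 0$ at an interior max, drop it, and invoke \ref{enu:bounds}), and convergence then follows from the Barles--Souganidis framework given monotonicity, consistency, stability, and the assumed strong comparison principle. Your extra discussion of the $1/u_i$ singularity and the suggestion to verify consistency against the polynomial formulation \eqref{eq:discrete_equations} is a reasonable way to flesh out a detail the paper only sketches, but it does not change the overall route.
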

\begin{proof}
We first prove the bound \eqref{eq:stable}. Choose $j$ such that
$u_{j}=\max_{i}u_{i}$. If $0<j<M$, it is straightforward to show
that, $(L_{\Delta x}^{\lambda}u)_{j}\leq0$. In this case,
\[
0=-\max_{\lambda\in\Lambda_{\Delta x}}\left\{ (L_{\Delta x}^{\lambda}u)_{j}-\eta_{j}u_{j}+{\displaystyle \frac{1}{2}\frac{\beta_{j}^{2}}{\alpha_{j}}\frac{1}{u_{j}}}\right\} \geq\eta_{j}u_{j}-{\displaystyle \frac{1}{2}\frac{\beta_{j}^{2}}{\alpha_{j}}\frac{1}{u_{j}}}
\]
and hence by \ref{enu:functions},
\[
u_{j}\leq\sqrt{\frac{1}{2}\frac{\beta_{j}^{2}}{\alpha_{j}\eta_{j}}}\leq\sup_{x}\sqrt{\frac{1}{2}\frac{\beta^{2}(x)}{\alpha(x)\eta(x)}}.
\]
If instead $j=0$ or $j=M$, then $u_{j}=g_{j}\leq\max\{g(0),g(1)\}$
by \ref{enu:functions}. Therefore,
\[
u_{j}\leq\sup_{x}\max\left(\sqrt{\frac{1}{2}\frac{\beta^{2}(x)}{\alpha(x)\eta(x)}},g(x)\right),
\]
which is finite due to \ref{enu:bounds}.

The remainder of the proof relies on the standard machinery of Barles
and Souganidis \cite{MR1115933}, so we simply sketch the ideas. The
discrete equations \eqref{eq:od} define a scheme that is monotone
and consistent in the sense of \cite{MR1115933}. Moreover, $u_{\Delta x}$
is bounded independently of $\Delta x$ by \eqref{eq:stable}. Therefore,
by \cite[Theorem 2.1]{MR1115933}, $u_{\Delta x}$ converges locally
uniformly to $U$. Since $\overline{\Omega}$ is compact, this convergence
is uniform.
\end{proof}

\subsection{\label{subsec:results}Numerical results}

\begin{table}[t]
\begin{centering}
\subfloat[Optimize then discretize]{\begin{centering}
{\footnotesize%
\begin{tabular}{cccccccc}
\toprule 
\multirow{1}{*}{$M$} & $\phantom{K}$ & Value & Rel. err. & Ratio & Its. & Inner its. & Time\tabularnewline
\midrule
32 &  & 2.8093 & 1.3e$-$2 &  & 5 & 7 & 4.6e$-$2\tabularnewline
64 &  & 2.8278 & 6.0e$-$3 &  & 5 & 7 & 4.8e$-$2\tabularnewline
128 &  & 2.8367 & 2.9e$-$3 & 2.07 & 5 & 7 & 6.5e$-$2\tabularnewline
256 &  & 2.8411 & 1.3e$-$3 & 2.04 & 5 & 7 & 1.2e$-$1\tabularnewline
512 &  & 2.8433 & 5.7e$-$4 & 2.02 & 5 & 7 & 1.8e$-$1\tabularnewline
1024 &  & 2.8444 & 1.9e$-$4 & 2.00 & 5 & 7 & 3.5e$-$1\tabularnewline
\bottomrule
\end{tabular}}
\par\end{centering}
}
\par\end{centering}
\begin{centering}
\subfloat[Discretize then optimize]{\begin{centering}
{\footnotesize%
\begin{tabular}{cccccccc}
\toprule 
\multirow{1}{*}{$M$} & $K$ & Value & Rel. err. & Ratio & Its. & $\phantom{\text{Inner its.}}$ & Time\tabularnewline
\midrule
32 & 1 & 1.1783 & 5.9e$-$1 &  & 7 &  & 3.3e$-$2\tabularnewline
64 & 2 & 1.9179 & 3.3e$-$1 &  & 7 &  & 5.4e$-$2\tabularnewline
128 & 4 & 2.7161 & 4.5e$-$2 & 0.93 & 7 &  & 1.4e$-$1\tabularnewline
256 & 8 & 2.7825 & 2.2e$-$2 & 12.0 & 7 &  & 4.5e$-$1\tabularnewline
512 & 16 & 2.8306 & 5.0e$-$3 & 1.38 & 7 &  & 1.4e$+$0\tabularnewline
1024 & 32 & 2.8421 & 9.8e$-$4 & 4.19 & 7 &  & 5.3e$+$0\tabularnewline
\bottomrule
\end{tabular}}
\par\end{centering}
}
\par\end{centering}
\caption{Convergence results (parameters used: \eqref{eq:parameterization})\label{tab:results}}
\end{table}

\begin{figure}[t]
\begin{centering}
\begin{center}
\subfloat[$M=128$]{\begin{centering}
\includegraphics[width=\figwidth]{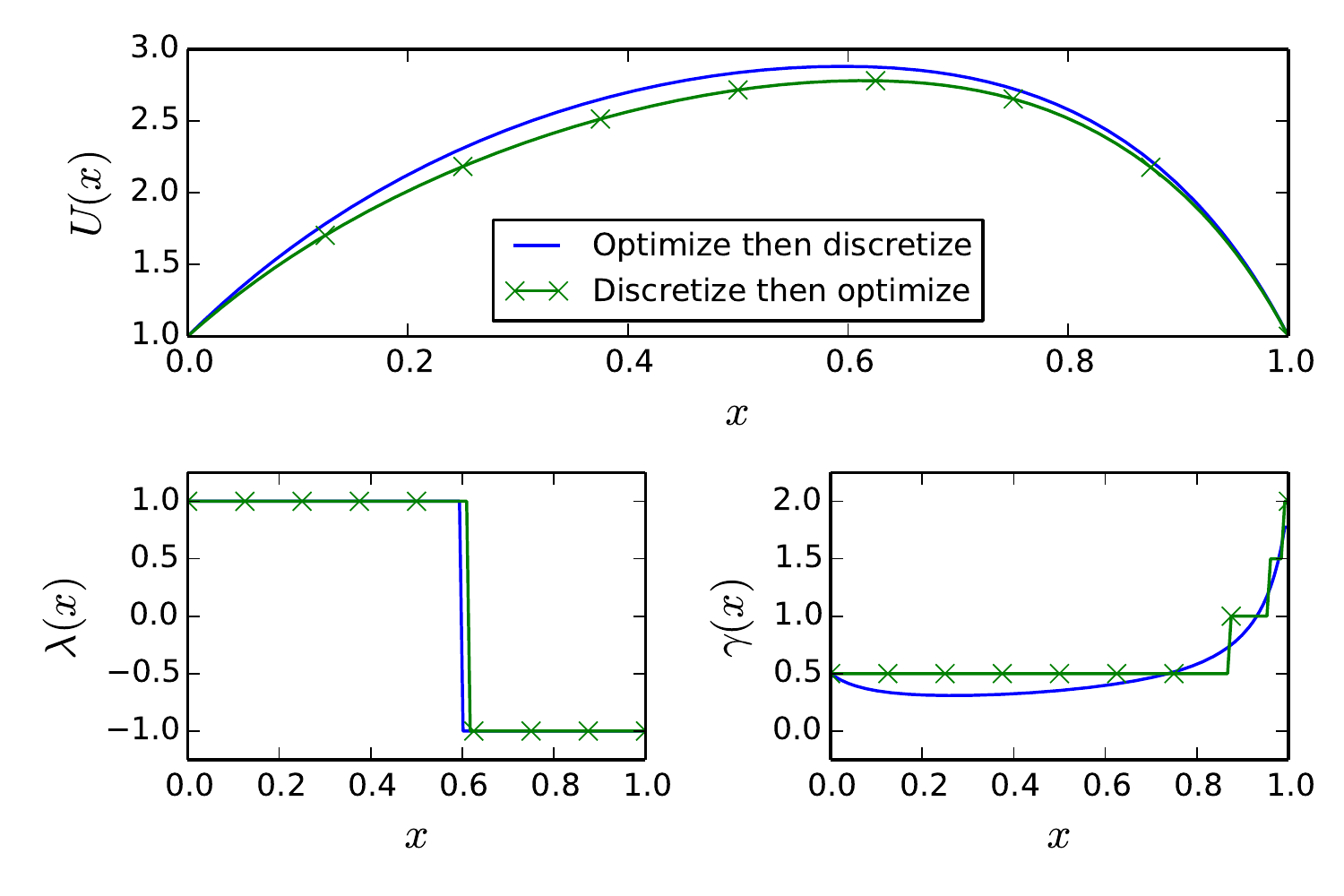}
\par\end{centering}
}
\par\end{center}

\begin{center}
\subfloat[$M=512$]{\begin{centering}
\includegraphics[width=\figwidth]{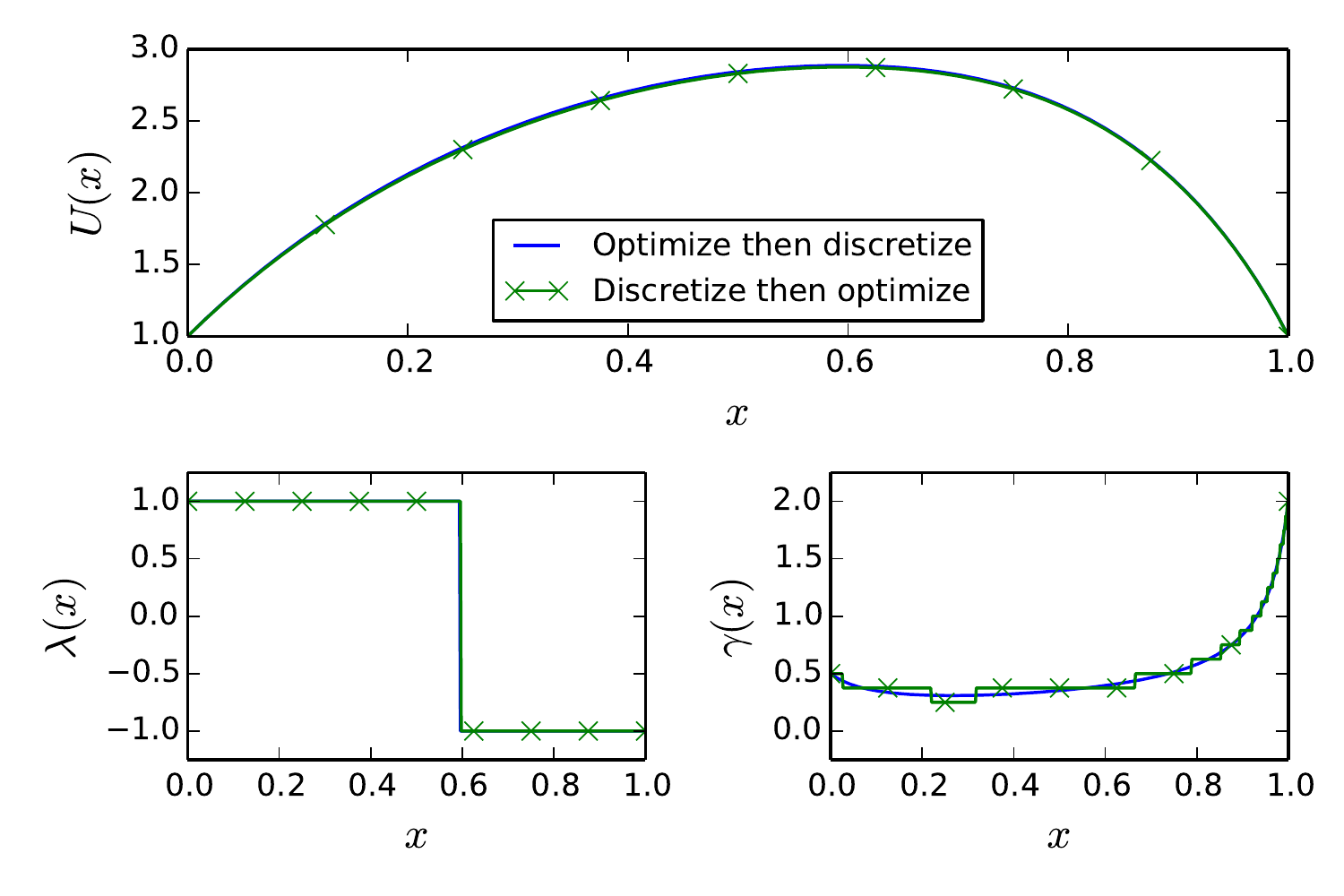}
\par\end{centering}
}
\par\end{center}
\par\end{centering}
\caption{Solution and controls computed by both schemes (parameters used: \eqref{eq:parameterization})\label{fig:plots}}
\end{figure}

In this subsection, we apply the OD and DO schemes (described in the previous subsection and \cref{app:do}, respectively) to compute a numerical solution of \eqref{eq:pde} under the parameterization
\begin{align}
\sigma(x,\lambda) & =0.2 & \alpha(x) & =2-x & \eta(x) & =0.04\nonumber \\
\mu(x,\lambda) & =0.04\lambda & \beta(x) & =1+x & g(x) & =1\label{eq:parameterization}
\end{align}
where $\lambda\in\Lambda=\{-1,1\}$. Since $\Lambda$ is finite, we
take $\Lambda_{\Delta x}=\Lambda$.
For the DO scheme, we take $\gamma_{\max}=2$ and discretize $\Gamma_{0}=[0,\gamma_{\max}]$
by a uniform partition $0=\gamma_{0}<\cdots<\gamma_{K}=\gamma_{\max}$
(see \cref{app:do} for details).

In our implementation of {\sc Policy-Iteration}, instead of using the terminating condition on line \ref{line:equality} of the algorithm, we terminate the algorithm when it meets the relative error tolerance
\begin{equation}
\left\Vert u_{(k)}-u_{(k-1)}\right\Vert _{\infty}\leq10^{-12}+10^{-6}\left\Vert u_{(k)}\right\Vert _{\infty}.
\label{eq:relative_error}
\end{equation}
In the case of the DO scheme, $m=2$ and $A(P)$ is a tridiagonal
matrix (see \eqref{eq:tridiagonal} and \eqref{eq:tridiagonal2} of \cref{app:do}). Therefore,
we use a tridiagonal solver to solve $A(P)x=b(P)$. As for the OD
scheme, $m=3$ and we use the Newton's method described in \cite[Section 4]{MR3519197}
to solve $A(P)x^{2}=b(P)$. Denoting by $x_{(k)}$ the iterates produced
by Newton's method, we terminate the algorithm when it meets the error tolerance
\[
\left\Vert x_{(k)}-x_{(k-1)}\right\Vert _{\infty}\leq10^{-24}+10^{-12}\left\Vert x_{(k)}\right\Vert _{\infty}.
\]

Convergence results are given in \cref{tab:results}, in which
we report a representative value of the numerical solution (Value),
the relative error (Rel. err.), ratio of errors (Ratio), number of
policy iterations (Its.), average number of iterations (Inner its.)
to solve the system $A(P)x^{m-1}=b(P)$ if applicable, and total time
elapsed in seconds (Time). The representative value of the numerical
solution is $u_{\Delta x}(x_{0})$ where $x_{0}=\nicefrac{1}{2}$
is the midpoint of $\overline{\Omega}$. The relative error is given
by
\[
\left|\frac{u_{\Delta x}(x_{0})-U(x_{0})}{U(x_{0})}\right|
\]
where $U$ is the exact solution. Since the exact solution is generally
unavailable, we replace $U$ by the solution computed by the OD scheme
at a level of refinement higher than that which is shown in the table.
The ratio of errors is given by
\[
\frac{u_{\Delta x/2}(x_{0})-u_{\Delta x}(x_{0})}{u_{\Delta x/4}(x_{0})-u_{\Delta x/2}(x_{0})}
\]
so that the base-2 logarithm of this quantity gives us an estimate
on the order of convergence (e.g., $\text{Ratio}\approx2$ suggests
linear convergence, $\text{Ratio}\approx4$ suggests quadratic, etc.).
Plots of the solution and optimal controls are given in \cref{fig:plots}.

The OD scheme is faster and more accurate than the DO scheme. Since both schemes require roughly the same number of policy iterations, it follows that the DO scheme loses most of its time on the pivot step on line \ref{line:policy_improvement} of {\sc Policy-Iteration}.
As $K\rightarrow\infty$, this effect becomes more pronounced. Note also that the OD scheme exhibits a fairly stable linear order of convergence while that of the DO scheme is erratic.

\subsection{\label{subsec:irregular}A problem which is neither s.d.d. nor weakly irreducibly diagonally dominant}

\begin{table}[t]
\begin{centering}
\subfloat[Optimize then discretize]{\begin{centering}
{\footnotesize%
\begin{tabular}{cccccccc}
\toprule 
\multirow{1}{*}{$M$} & $\phantom{K}$ & Value & Rel. err. & Ratio & Its. & Inner. its. & Time\tabularnewline
\midrule
32 &  & 3.0703 & 1.6e$-$1 &  & 3 & 6.67 & 4.3e$-$2\tabularnewline
64 &  & 3.3567 & 8.5e$-$2 &  & 3 & 6.67 & 3.7e$-$2\tabularnewline
128 &  & 3.5114 & 4.3e$-$2 & 1.85 & 3 & 6.67 & 5.4e$-$2\tabularnewline
256 &  & 3.5917 & 2.1e$-$2 & 1.92 & 3 & 6.67 & 9.3e$-$2\tabularnewline
512 &  & 3.6327 & 9.9e$-$3 & 1.96 & 3 & 6.67 & 1.7e$-$1\tabularnewline
1024 &  & 3.6534 & 4.3e$-$3 & 1.98 & 3 & 6.67 & 3.3e$-$1\tabularnewline
2048 &  & 3.6638 & 1.4e$-$3 & 1.99 & 3 & 7.67 & 6.5e$-$1\tabularnewline
\bottomrule
\end{tabular}}
\par\end{centering}
}
\par\end{centering}
\begin{centering}
\subfloat[Discretize then optimize]{\begin{centering}
{\footnotesize%
\begin{tabular}{cccccccc}
\toprule 
\multirow{1}{*}{$M$} & $K$ & Value & Rel. err. & Ratio & Its. & $\phantom{\text{Inner its.}}$ & Time\tabularnewline
\midrule
32 & 1 & 0.9273 & 7.5e$-$1 &  & 3 &  & 1.7e$-$2\tabularnewline
64 & 2 & 1.8839 & 4.9e$-$1 &  & 5 &  & 3.9e$-$2\tabularnewline
128 & 4 & 3.2430 & 1.2e$-$1 & 0.70 & 7 &  & 1.4e$-$1\tabularnewline
256 & 8 & 3.5376 & 3.6e$-$2 & 4.61 & 9 &  & 6.4e$-$1\tabularnewline
512 & 16 & 3.6163 & 1.4e$-$2 & 3.74 & 14 &  & 3.5e$+$0\tabularnewline
1024 & 32 & 3.6490 & 5.5e$-$3 & 2.41 & 15 &  & 1.4e$+$1\tabularnewline
2048 & 64 & 3.6629 & 1.7e$-$3 & 2.35 & 7 &  & 2.3e$+$1\tabularnewline
\bottomrule
\end{tabular}}
\par\end{centering}
}
\par\end{centering}
\caption{Convergence results (parameters used: \eqref{eq:parameterization_2})\label{tab:results_2}}
\end{table}

\begin{figure}[t]
\begin{centering}
\begin{center}
\includegraphics[width=\figwidth]{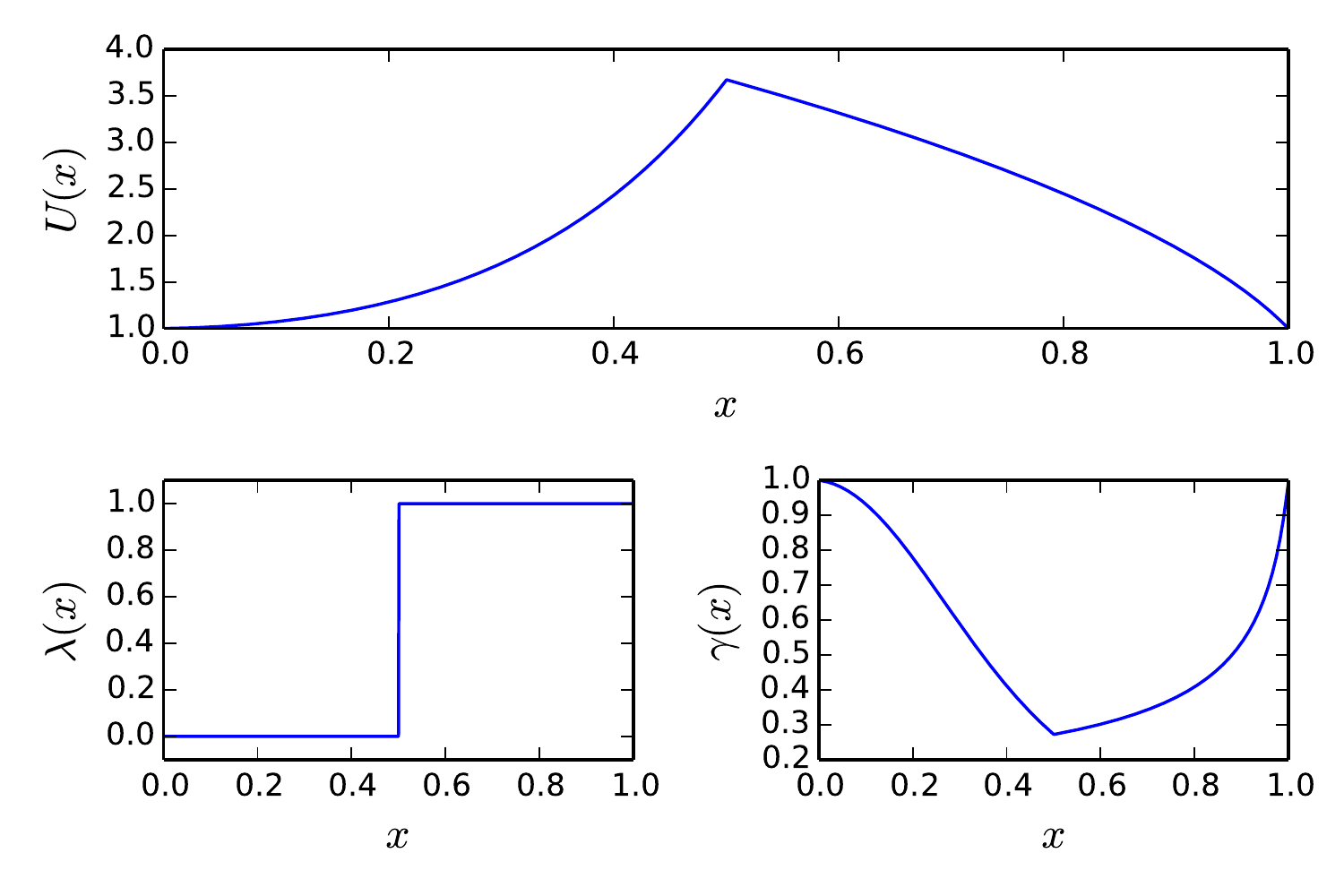}
\par\end{center}
\par\end{centering}
\caption{Solution and controls computed (parameters used: \eqref{eq:parameterization_2};
OD scheme only)\label{fig:plots_2}}
\end{figure}

\begin{figure}
\centering
\scalebox{\figscale}{
\begin{tikzpicture}[node distance=1.25cm]
\node [thick, draw=black!35, ellipse, fill=black!15                                 ] (a) {$0$};
\node [thick, draw=black!35, ellipse, fill=black!15, right of=a                     ] (b) {$1$};
\node [thick,                ellipse,                right of=b                     ] (c) {$\cdots$};
\node [thick, draw=black!35, ellipse, fill=black!15, right of=c, node distance=1.5cm] (d) {$i{-}1$};
\node [thick, draw=black!35, ellipse, fill=black!15, right of=d                     ] (e) {$i$};
\node [thick, draw=black!35, ellipse, fill=black!15, right of=e                     ] (f) {$i{+}1$};
\node [thick,                ellipse,                right of=f                     ] (g) {$\cdots$};
\node [thick, draw=black!35, ellipse, fill=black!15, right of=g, node distance=1.5cm] (h) {$M{-}1$};
\node [thick, draw=black!35, ellipse, fill=black!15, right of=h, node distance=1.5cm] (i) {$M$};
\path [thick, ->, >=stealth, dashed] (b.south west) edge [bend left] (a.south east);
\path [thick, ->, >=stealth, dashed] (c.south west) edge [bend left] (b.south east);
\path [thick, ->, >=stealth, dashed] (d.south west) edge [bend left] (c.south east);
\path [thick, ->, >=stealth, dashed] (e.south west) edge [bend left] (d.south east);
\path [thick, ->, >=stealth, dashed] (f.south west) edge [bend left] (e.south east);
\path [thick, ->, >=stealth, dashed] (g.south west) edge [bend left] (f.south east);
\path [thick, ->, >=stealth, dashed] (h.south west) edge [bend left] (g.south east);
\path [thick, ->, >=stealth        ] (b.north east) edge [bend left] (c.north west);
\path [thick, ->, >=stealth        ] (c.north east) edge [bend left] (d.north west);
\path [thick, ->, >=stealth        ] (d.north east) edge [bend left] (e.north west);
\path [thick, ->, >=stealth        ] (e.north east) edge [bend left] (f.north west);
\path [thick, ->, >=stealth        ] (f.north east) edge [bend left] (g.north west);
\path [thick, ->, >=stealth        ] (g.north east) edge [bend left] (h.north west);
\path [thick, ->, >=stealth        ] (h.north east) edge [bend left] (i.north west);
\end{tikzpicture}
}

\caption{Directed graph of $A$ defined by
\eqref{eq:tridiagonal_tensor} and \eqref{eq:tridiagonal_tensor2} with $\sigma$ and $\mu$ given by
\eqref{eq:parameterization_2}\label{fig:graph_2}}
\end{figure}
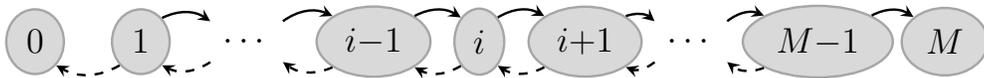

We now turn to a parameterization of \eqref{eq:pde} whose corresponding
``discretization tensor'' $A(\boldsymbol{\lambda})$ defined by
\eqref{eq:tridiagonal_tensor} and \eqref{eq:tridiagonal_tensor2} is neither s.d.d. nor weakly irreducibly
diagonally dominant. We are motivated by an analogous phenomenon that
occurs for classical discretizations of degenerate elliptic differential
equations first studied in \cite{MR0162367} in which the matrix arising
from the discretization is neither s.d.d. nor irreducibly diagonally
dominant (cf. \cite{MR1384509,MR3493959,chen2018monotone}).

The parameterization we study is
\begin{align}
\sigma(x,\lambda) & =0.3(1-\lambda) & \alpha(x) & =1 & \eta(x) & =\boldsymbol{1}_{\{x\leq0.5\}}(x)\nonumber \\
\mu(x,\lambda) & =0.04\lambda & \beta(x) & =1 & g(x) & =1\label{eq:parameterization_2}
\end{align}
where $\lambda\in\Lambda=\{0,1\}$.
Note that this parameterization does not satisfy \ref{enu:functions} or \ref{enu:bounds} since $\eta$
is allowed to be zero.
Therefore, the argument used to establish that $A(\boldsymbol{\lambda})$ is a strong M-tensor in \cref{subsec:od} fails (see, in particular, \eqref{eq:sum}).

Letting $\omega$ be any function which maps $\mathbb{R}_{++}$ to itself such that $\lim_{t\rightarrow 0}\omega(t)=0$, one way to get around the above issue is to replace $A(\boldsymbol{\lambda})$ by $A(\boldsymbol{\lambda}) + I\omega({\Delta x})$ in \eqref{eq:bellman_3}, since the latter is trivially s.d.d.
The obvious downside of this approach is that it introduces additional discretization error.

Fortunately, it turns out that we can directly establish that $A(\boldsymbol{\lambda})$ is a strong M-tensor by relying on the theory of w.c.d.d. tensors.
In particular, by \eqref{eq:sum},
\begin{enumerate}[label=(\emph{\roman*})]
	\item $A(\boldsymbol{\lambda})$ is a w.d.d. (not s.d.d.) Z-tensor since $\eta_{i}\geq0$ and
	\item $0,M\in J(A(\boldsymbol{\lambda}))$ are s.d.d. rows.
\end{enumerate}
The directed graph of $A(\boldsymbol{\lambda})$ is shown in \cref{fig:graph_2} where we have
\begin{enumerate}[label=(\emph{\roman*})]
	\item ignored self-loops of the form $i\rightarrow i$ and
	\item used a dashed line to indicate an edge that is present only when $\lambda_{i}=0$.
\end{enumerate}
Note that for any $i\notin J(A(\boldsymbol{\lambda}))$, we can form the walk $i\rightarrow i+1\rightarrow\cdots\rightarrow M$ ending at the s.d.d. vertex $M\in J(A(\boldsymbol{\lambda}))$.
Therefore, $A(\boldsymbol{\lambda})$ is w.c.d.d. and hence a strong M-tensor by \cref{thm:wcdd}.
Now, by \cref{thm:bellman}, we can compute a solution of the OD scheme
as applied to the parameterization \eqref{eq:parameterization_2}
by policy iteration. Convergence results and plots are shown in \cref{tab:results_2}
and \cref{fig:plots_2}, respectively. The $C^{1}$ discontinuity
in the solution is due to the discontinuity in $\eta$.
\section{Summary}

In this work, we introduced the high order Bellman equation \cref{eq:bellman}, extending classical Bellman equations to the tensor setting.
We also introduced w.c.d.d. tensors (\cref{def:wcdd}), also extending the notion of w.c.d.d. matrices to the tensor setting.
We established a relationship between w.c.d.d. tensors and M-tensors (\cref{thm:wcdd}), analogous to the relationship between w.c.d.d. matrices and M-matrices \cite{azimzadeh2017fast}.
We proved that a sufficient condition to ensure the existence and uniqueness of a positive solution to a high order Bellman equation is that the tensors appearing in the equation are s.d.d. M-tensors (\cref{lem:uniqueness} and \cref{lem:existence}).
We also showed that the s.d.d. requirement can be relaxed to the weaker requirement of w.c.d.d. so long as we restrict ourselves to a finite set of policies (\cref{lem:existence_wcdd}).
In this case, the solution of \cref{eq:bellman} can be computed by a policy iteration algorithm (\cref{thm:bellman}).
The question of whether or not the assumption of finitude can be removed remains open (\cref{rem:finite_disclaimer}).
We applied our findings to create a so-called ``optimize then discretize'' scheme for an optimal stochastic control problem which outperforms (in both computation time and accuracy) a classical ``discretize then optimize'' approach (\cref{sec:application}).
\@ifclassloaded{siamart1116}{
}{
\clearpage
}
\bibliography{main}
\appendix \section{\label{app:do}Discretize then optimize scheme}

In this appendix, we derive the DO scheme for the differential equation \eqref{eq:pde}.
Since the set $\Gamma=[0,\infty)$ is unbounded, we restrict our attention to controls $\gamma$ in some bounded interval
$\Gamma_{0}=[0,\gamma_{\max}]$.
To ensure the consistency of the resulting scheme, $\gamma_{\max}$ should be chosen sufficiently large.
For each $\Delta x>0$, we let $\Gamma_{\Delta x}$ denote a finite subset of $\Gamma_{0}$ such that $d_{H}(\Gamma_{\Delta x},\Gamma_{0})\rightarrow0$ as $\Delta x\downarrow0$.
The DO discretization is given by the $M+1$ equations
\begin{equation}
\begin{cases}
-{\displaystyle \max_{(\gamma,\lambda)\in\Gamma_{\Delta x}\times\Lambda_{\Delta x}}}\left\{ (L_{\Delta x}^{\lambda}u)_{i}-\eta_{i}u_{i}-\frac{1}{2}\alpha_{i}\gamma^{2}u_{i}+\beta_{i}\gamma\right\} =0, & \text{for }0<i<M\\
u_{i}=g_{i} & \text{for }i=0,M
\end{cases}\label{eq:do}
\end{equation}
where the various quantities $L_{\Delta x}^{\lambda}$, $\eta_{i}$, etc. are defined in \cref{subsec:od} (compare \eqref{eq:do} with the OD discretization \eqref{eq:od}).

We can transform \eqref{eq:do} into a classical $m=2$ Bellman equation as follows.
Define the Cartesian product $\boldsymbol{\Gamma}_{\Delta x} = (\Gamma_{\Delta x})^{M+1}$ and denote by $\boldsymbol{\gamma}=(\gamma_{0},\ldots,\gamma_{M})$ an element of $\boldsymbol{\Gamma}_{\Delta x}$ with $\gamma_{i}\in\Gamma_{\Delta x}$.
Define $\boldsymbol{\Lambda}_{\Delta x}$ and $\boldsymbol{\lambda}=(\lambda_{0},\ldots,\lambda_{M})$ similarly.
Let $A(\boldsymbol{\gamma},\boldsymbol{\lambda})=(a_{ij}(\boldsymbol{\gamma},\boldsymbol{\lambda}))$ be the tridiagonal matrix whose only nonzero entries are
\stepcounter{equation}\begin{align}
a_{i,i-1}(\boldsymbol{\gamma},\boldsymbol{\lambda}) & =-\frac{1}{2}\sigma_{i}(\lambda_{i})^{2}\frac{1}{(\Delta x)^{2}}+\phantom{|}\mu_{i}(\lambda_{i})\phantom{|}\frac{1}{\Delta x}\boldsymbol{1}_{(-\infty,0)}(\mu_{i}(\lambda_{i})), \nonumber \\
a_{i,i}(\boldsymbol{\gamma},\boldsymbol{\lambda}) & =+\frac{1}{2}\sigma_{i}(\lambda_{i})^{2}\frac{2}{(\Delta x)^{2}}+|\mu_{i}(\lambda_{i})|\frac{1}{\Delta x}+\eta_{i}+\frac{1}{2}\alpha_{i}\gamma_{i}^{2},
& \text{if }0<i<M \tag{\theequation a} \label{eq:tridiagonal} \\
a_{i,i+1}(\boldsymbol{\gamma},\boldsymbol{\lambda}) & =-\frac{1}{2}\sigma_{i}(\lambda_{i})^{2}\frac{1}{(\Delta x)^{2}}-\phantom{|}\mu_{i}(\lambda_{i})\phantom{|}\frac{1}{\Delta x}\boldsymbol{1}_{(0,+\infty)}(\mu_{i}(\lambda_{i})), \nonumber 
\end{align}
and
\begin{equation}
a_{i,i}(\boldsymbol{\gamma},\boldsymbol{\lambda}) =1, \qquad  \text{if }i=0,M. \tag{\theequation b} \label{eq:tridiagonal2}
\end{equation}
Lastly, define the vector $b(\boldsymbol{\gamma})=(b_{0}(\boldsymbol{\gamma}),\ldots,b_{M}(\boldsymbol{\gamma}))$
by
\[
b_{i}(\boldsymbol{\gamma})=\begin{cases}
{\displaystyle \beta}_{i}\gamma_{i}, & \text{if }0<i<M\\
g_{i}, & \text{if }i=0,M.
\end{cases}
\]
Then, the discrete equations \eqref{eq:do} are equivalent to the classical Bellman equation
\[
\min_{(\boldsymbol{\gamma},\boldsymbol{\lambda})\in\boldsymbol{\Gamma}_{\Delta x}\times\boldsymbol{\Lambda}_{\Delta x}}\left\{ A(\boldsymbol{\gamma},\boldsymbol{\lambda})u-b(\boldsymbol{\gamma})\right\} =0.
\]
The downside of the above approach is twofold:
\begin{enumerate}[label=(\emph{\roman*})]
\item The size of the policy set is $|\mathcal{P}|=|\boldsymbol{\Gamma}_{\Delta x}||\boldsymbol{\Lambda}_{\Delta x}|$.
In the OD approach, the size of the policy set is $|\mathcal{P}|=|\boldsymbol{\Lambda}_{\Delta x}|$ (recall that the policy iteration algorithm takes, in the worst case, $|\mathcal{P}|$ iterations).
\item Assuming $\gamma_{\max}$ is chosen large enough, the approximation
of $\Gamma_{0}$ by $\Gamma_{\Delta x}$ introduces $O(d_{H}(\Gamma_{\Delta x},\Gamma_{0}))$
local truncation error.
\end{enumerate}

\end{document}